\documentclass[11pt]{article}
\usepackage[a4paper,margin=29mm]{geometry}
\usepackage[T1]{fontenc}
\usepackage{lmodern}
\usepackage{amsmath,amssymb,amsthm,mathtools}
\usepackage{booktabs,array,microtype}
\usepackage[hidelinks]{hyperref}
\usepackage[nameinlink,capitalise]{cleveref}

\hypersetup{
  pdftitle={A two-step supercongruence for an Ap\'ery-like sequence},
  pdfauthor={Huimin Zheng},
  pdfsubject={A proof of Zhi-Hong Sun's Conjecture 2.6},
  pdfkeywords={Apery-like numbers, supercongruences, p-adic gamma function,
    Dwork congruences, finite-field hypergeometric functions}
}

\newtheorem{theorem}{Theorem}[section]
\newtheorem{proposition}[theorem]{Proposition}
\newtheorem{lemma}[theorem]{Lemma}

\theoremstyle{definition}
\theoremstyle{remark}\newtheorem{remark}[theorem]{Remark}

\newcommand{\Zp}{\mathbb Z_p}
\newcommand{\Qp}{\mathbb Q_p}
\newcommand{\Fq}{\mathbb F_q}
\newcommand{\OO}{\mathcal O}
\newcommand{\Aalg}{\mathcal A}
\newcommand{\LL}{\mathcal L}
\newcommand{\JJ}{\mathcal J}
\newcommand{\TT}{\mathsf T}
\newcommand{\vp}{v_p}
\newcommand{\fall}[1]{^{\underline{#1}}}

\title{A two-step supercongruence for an Ap\'ery-like sequence}
\author{Huimin Zheng\\
\small Anhui Science and Technology University\\
\small \texttt{zhenghm@ahstu.edu.cn}}
\date{}

\begin{document}
\maketitle

\begin{abstract}
Let
\[
G_n=\sum_{k=0}^n4^k\binom{2n-2k}{n-k}^{2}\binom{2k}{k}.
\]
Zhi-Hong Sun conjectured that, for primes $p\equiv3\pmod4$, positive
odd integers $m$, and $r\ge2$, the two-step congruence
$G_{(mp^r-1)/2}\equiv p^2G_{(mp^{r-2}-1)/2}\pmod {p^{2r-1}}$ holds.
We prove the stronger valuation statement
\[
G_{(p^2M-1)/2}-p^2G_{(M-1)/2}\in
p^{2v_p(M)+3}\mathbb Z_p
\]
for every positive odd $M$.  The proof converts the sum to a terminating
${}_3F_2$, constructs a cancelled digit-transfer operator, and identifies
a two-dimensional analytic quotient of its cubic difference operator.
The quotient operator has characteristic polynomial $X^2-p^2$; its second
trace is evaluated through the Gross--Koblitz formula and Greene's
finite-field Dixon identity.  A logarithmic-loss Green inverse converts
this spectral identity into an actual integral analytic primitive.  The
exceptional prime $3$ requires a finite exact PARI/GP certificate, while
the infinite tail is bounded symbolically.  Thus the computation is finite,
reproducible, and separated from the uniform part of the proof.
\end{abstract}

\noindent\textbf{Keywords:} Ap\'ery-like numbers; supercongruences;
$p$-adic gamma function; Dwork congruences; finite-field hypergeometric
functions; computer-assisted proof.

\medskip
\noindent\textbf{Mathematics Subject Classification (2020):}
11B65, 11F33, 11S80, 33C20.

\section{Introduction}

The sequence $(G_n)_{n\ge0}$ begins $1,12,164,2352,34596,\ldots$ and is
one of the sporadic Ap\'ery-like sequences considered by Sun
\cite{Sun2020}.  The following was stated there as Conjecture~2.6.

\begin{theorem}\label{thm:main}
Let $p\equiv3\pmod4$ be prime, let $m$ be a positive odd integer, and let
$r\ge2$.  Then
\[
 G_{(mp^r-1)/2}\equiv p^2G_{(mp^{r-2}-1)/2}\pmod {p^{2r-1}}.
\]
More precisely, if $M$ is any positive odd integer, then
\begin{equation}\label{eq:strong-main}
 G_{(p^2M-1)/2}-p^2G_{(M-1)/2}
 \in p^{2\vp(M)+3}\Zp .
\end{equation}
\end{theorem}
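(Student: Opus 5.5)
Our plan follows the route announced in the abstract; the reduction of the first statement to the second is immediate. Writing $m p^{r-2}=M$ with $m$ odd and $r\ge2$, we have $\vp(M)\ge r-2$, so \eqref{eq:strong-main} gives divisibility by $p^{2(r-2)+3}=p^{2r-1}$. Hence all the work is in \eqref{eq:strong-main}.

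\emph{Step 1 (hypergeometric form).} Put $n=(N-1)/2$ with $N$ odd. We rewrite
\[
G_n=\binom{2n}{n}^2\sum_{k}\frac{(-n)_k^2(1/2)_k}{(1/2-n)_k^2\,k!}\,(\pm1)^k
\]
as a terminating ${}_3F_2$ with parameters $-n,-n,\tfrac12;\tfrac12-n,\tfrac12-n$. With $n=(N-1)/2$ the lower parameters become $1-N/2$, so the sum is a ratio of $p$-adic Gamma values in the $N$-variable. We then express $G_{(N-1)/2}$ as $A(N)$, a $p$-adic analytic function of $N$ on residue classes. More precisely, $A$ is a finite sum of terms of the form $\Gamma_p$-products times truncated sums. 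The relation $N\mapsto p^2N$ is the Dwork/Frobenius digit shift applied twice.

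\emph{Step 2 (digit-transfer operator).} We split $k=k_0+pk_1$ and use Gross--Koblitz/Morita functional equations to write the sum at $pN$ as an operator $\TT$ acting on the sum at $N$, up to the cancelled Gamma factors. The pair $(G_{(pN-1)/2},\,\cdot)$ satisfies a cubic recurrence coming from the Apéry-like differential equation of order three. We view $\TT$ on the three-dimensional solution space of the associated difference operator $\LL$. The condition $p\equiv3\pmod4$ is used here: Frobenius swaps the characters of order $4$ involved (the parameter $1/2$ combined with the $4^k$ weight), so a single step is not diagonal. This is why the congruence is two-step and why one obtains $\TT^2$ rather than $\TT$. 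We identify a two-dimensional analytic quotient $\LL/\JJ$ on which $\TT^2$ acts. We show its characteristic polynomial is $X^2-\mathrm{tr}(\TT^2)X+p^2$, with determinant from Gamma reflection.

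\emph{Step 3 (trace).} The key spectral input is $\mathrm{tr}(\TT^2)=0$, so the characteristic polynomial is $X^2-p^2$. The trace is a finite-field ${}_3F_2$ over $\mathbb F_{p^2}$ via Gross--Koblitz. Greene's finite-field Dixon identity evaluates it as a product of Jacobi sums. For $p\equiv3\pmod 4$ this product has the needed shape, namely a Gauss sum of a quartic character over $\mathbb F_{p^2}$, which is pure. That gives the eigenvalues $\pm p$ on the quotient, hence $\TT^2\equiv p^2$ there.

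\emph{Step 4 (primitive and valuation), the main obstacle.} Turning $(\TT^2-p^2)A\in\JJ$ into the stated valuation requires solving $(\TT^2-p^2)F=A(p^2N)-p^2A(N)$ with $F$ integral and analytic. We plan to construct a Green inverse of $\LL$ on $\JJ$ with only logarithmic loss, i.e.\ denominators $\le p^{\lfloor\log_p k\rfloor}$. We then evaluate at $N=M$, where the vanishing order $2\vp(M)$ comes from the factor $\binom{2n}{n}^2$-type zeros at $N=M$ and the extra $p^3$ from the spectral identity. Controlling the denominators of the Green kernel is where we expect the difficulty; we would bound them by Dwork-type congruences for truncated sums. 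The estimate degrades for $p=3$, where the logarithmic loss competes with $p^3$. There we would bound the tail symbolically for large $\vp$ and check the finitely many low residue classes modulo a fixed power of $3$ by exact computation.
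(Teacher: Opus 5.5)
Your reduction to \eqref{eq:strong-main} and your overall architecture do match the paper: digit transfer, a two-dimensional quotient of a cubic difference operator, Gross--Koblitz with Greene's Dixon identity over $\mathbb F_{p^2}$, a logarithmic-loss Green inverse, and a finite check at $p=3$. The spectral step, however, is wrong as stated. If your $\TT^2$ acts as $p^2$ on a two-dimensional quotient, then $\operatorname{tr}(\TT^2)=2p^2$, not $0$. Also, $X^2-\operatorname{tr}(\TT^2)X+p^2$ with vanishing trace is $X^2+p^2$, which would give the congruence with $-p^2$. In the paper, the single-step quotient operator $M_{p,\infty}$ induced by $T$ has two traces:
\begin{itemize}
\item $\operatorname{tr}M_{p,\infty}=0$, proved by a fixed-point-free pairing of digits that uses that $h=(p-1)/2$ is odd;
\item $\operatorname{tr}(M_{p,\infty}^2)=2p^2$, which is exactly what the Greene--Dixon evaluation $\sum_\chi J(\phi,\chi)^3=2p^2(p^2-1)$ delivers.
\end{itemize}
The determinant $-p^2$ is then forced by these two traces; it is not computed directly from reflection. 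You also need a way to take traces on a quotient of an infinite-dimensional space. The paper uses the intertwining $TD=p^3DK$ and a Fredholm comparison to obtain $\operatorname{tr}(M_{p,\infty}^r)=\operatorname{Tr}(T^r)-p^{3r}\operatorname{Tr}(K^r)$, and evaluates the right side by the fixed-point formula.

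The more serious gap is that nothing in your plan produces the $p$-adic size. The identity $\Pi((T^2-p^2)1)=0$ is exact and contains no divisibility. The Green inverse only converts it into some analytic $q$ with $Dq=E_0:=(T^2-p^2)1$, whose norm is controlled by that of $E_0$ and the kernel loss. The descent needs $q\in p^4\OO$, $S\in p^2\OO$ and $\Lambda\OO\subseteq\LL$. For $p\ge7$ this rests on an independent supercongruence, $T^21(a)\equiv p^2\pmod{p^5}$ for all integers $a\ge0$. It is proved in Appendix~\ref{app:large-prime} by finite-product Dixon and odd-Dixon arithmetic, then upgraded to $E_0\in p^5\OO$ by a Vandermonde argument. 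At $p=3$ it requires a separate computation of $S$ and $\Lambda$ plus the finite certificate.

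Nor can $p^{2\vp(M)}$ come from zeros of $\binom{2n}{n}^2$. For $n=(M-1)/2$ with $M=p^em$, the lowest $e$ base-$p$ digits of $n$ all equal $h$. Doubling creates no carries there, so $\vp\binom{2n}{n}$ does not depend on $e$. Moreover, $G_{(N-1)/2}$ is not an analytic function of $N$ that can be evaluated at $N=M$. The paper instead combines:
\begin{itemize}
\item the variable-length functionals $\JJ_N$, with $\JJ_{pM}=\JJ_MT_{pM}$ and $\JJ_MD_M=0$;
\item the $N$-dependent operators $T_N$ and $D_N=D+\tfrac N2B_2$;
\item the master identity $(T_{pM}T_{p^2M}-p^2)1=D_Mq+pMX+p^3M^2R$ with $X\in p^2\LL$;
\item the lattice laws \eqref{eq:lattice-laws} applied along $\JJ_M=\JJ_mT_{pm}\cdots T_{p^em}$, giving $\JJ_M(X)\in p^{e+2}\Zp$.
\end{itemize}
The exponent $2e+3$ then arises as $(e+1)+(e+2)$ from $pMX$ and as $3+2e$ from $p^3M^2R$. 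Your equation $(\TT^2-p^2)F=\cdots$ inverts the wrong operator. What is needed is a $D$-primitive, so that the coboundary is killed by $\JJ_MD_M=0$.
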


The residue class $p\equiv3\pmod4$ is the supersingular branch of the
underlying hypergeometric family.  Ordinary unit-root congruences do not
directly apply there.  First-layer CM results such as
\cite{KibelbekLongMossShellerYuan} explain the initial vanishing, while
the arbitrary-length theory of \cite{BeukersDelaygue} assumes a unit root.
The present proof instead packages two base-$p$ digits at a time.

Our argument has four components.  First, an exact binomial transform puts
the original convolution into a terminating hypergeometric form.  Second,
cancelled digit weights define an analytic transfer operator $T_N$ and a
cubic difference operator $D_N$.  Third, the pure transfer $T=T_0$ acts on
a two-dimensional Hausdorff quotient.  Its characteristic polynomial is
$X^2-p^2$, obtained from exact first and second traces.  Finally, a Green
kernel with only logarithmic denominator loss upgrades the quotient identity
to an actual analytic primitive.  At $p=3$ its required $3^4$-integrality is
reduced to a finite coefficient calculation with a rigorously controlled
infinite tail.

For orientation, the logical dependence is
\[
 \begin{aligned}
 \text{endpoints}&\Longrightarrow
 \text{coefficient decay and quotient spectrum},\\
 &\Longrightarrow\text{an actual primitive}
 \Longrightarrow\text{all-depth descent}.
 \end{aligned}
\]
The middle implication is the point at which topology matters.  The spectral
calculation gives only membership in the closure of the image of the
difference operator.  A separately constructed Green inverse, together with
the coefficient decay, produces an actual convergent primitive.  We keep the
main line in the body of the paper and place the longer finite-product and
denominator calculations in Appendices~\ref{app:large-prime}--\ref{app:p3}.

The finite-field part uses classical results of Gross--Koblitz
\cite{GrossKoblitz} and Greene \cite{Greene}.  A recent coefficient bound for
the scaled Morita gamma function is taken from
\cite{BaiPomerleanoSeidel}; no result concerning quantum connections is used.
The modular-form occurrence of $G_n$ studied in \cite{AllenLongSaad} is
structurally suggestive, but its Fourier-index recurrence does not directly
match the affine half-index tower in \cref{thm:main}.

\section{The terminating hypergeometric representation}

For a positive odd integer $N$, put
\[
 H(N)=G_{(N-1)/2},\qquad \TT(N)=4^{1-N}H(N),\qquad L_N=(N-1)/2,
\]
and define
\begin{equation}\label{eq:BJN}
 B_k(N)=\frac{((1-N)/2)_k(1/2)_k^2}{(k!)^3}
 =(-1)^k\binom{L_N}{k}\frac{\binom{2k}{k}^2}{16^k},
 \qquad \JJ_N(f)=\sum_{k=0}^{L_N}B_k(N)f(k).
\end{equation}

\begin{lemma}\label{lem:representation}
For every positive odd $N$, one has $\TT(N)=\JJ_N(1)$.
\end{lemma}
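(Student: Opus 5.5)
The plan is to prove the equivalent identity
\[
 G_L=\sum_{k=0}^{L}(-1)^k\binom{L}{k}\binom{2k}{k}^2 16^{L-k},\qquad L=L_N,
\]
which is what $\TT(N)=\JJ_N(1)$ says once we note $4^{1-N}=16^{-L}$ and use the second expression for $B_k(N)$ in \eqref{eq:BJN}. As a sanity check, at $L=1$ both sides equal $12$. The proof will be an identity of formal power series in $\mathbb Q[[x]]$, obtained from Pfaff's transformation; it needs no $p$-adic input.

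First, the definition of $G_n$ is a Cauchy product. Put $A(x)=\sum_{n\ge0}\binom{2n}{n}^2x^n={}_2F_1(\tfrac12,\tfrac12;1;16x)$ and use $\sum_{k\ge0}\binom{2k}{k}(4x)^k=(1-16x)^{-1/2}$. Then
\[
 \sum_{n\ge0}G_nx^n=\frac{A(x)}{\sqrt{1-16x}}.
\]
Second, the right-hand side is a binomial transform. With $a_k=\binom{2k}{k}^2$, the standard computation, interchanging sums and using $\sum_{L\ge k}\binom{L}{k}y^{L}=y^k(1-y)^{-k-1}$ at $y=16x$, gives
\[
 \sum_{L\ge0}x^L\sum_{k}(-1)^k\binom Lk a_k16^{L-k}
 =\frac{1}{1-16x}\,A\!\Bigl(\frac{-x}{1-16x}\Bigr).
\]
This is legitimate in $\mathbb Q[[x]]$ because $-x/(1-16x)$ has zero constant term.

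The lemma is therefore equivalent to
\[
 A\!\Bigl(\frac{-x}{1-16x}\Bigr)=\sqrt{1-16x}\,A(x).
\]
This is Pfaff's transformation
\[
 {}_2F_1(a,b;c;z)=(1-z)^{-a}\,{}_2F_1\!\Bigl(a,c-b;c;\frac{z}{z-1}\Bigr),
\]
applied with $a=b=\tfrac12$, $c=1$ (so $c-b=\tfrac12$) and $z=16x$. Indeed $z/(z-1)=16\cdot\bigl(-x/(1-16x)\bigr)$, so the transformation reads $A(x)=(1-16x)^{-1/2}A\bigl(-x/(1-16x)\bigr)$, which is exactly what is needed. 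Comparing coefficients of $x^L$ then gives the lemma for every $L\ge0$, that is, for every positive odd $N$.

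The only point needing care is that Pfaff's identity holds as an identity of formal power series, not just of analytic functions. This follows either from analytic validity for small $|x|$ together with identity of Taylor coefficients, or from a direct coefficientwise proof via Chu--Vandermonde. Everything else is routine bookkeeping of generating functions.
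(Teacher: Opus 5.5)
Your proposal is correct and is essentially the paper's proof. Both arguments write $\sum_n G_nx^n$ as the Cauchy product $(1-16x)^{-1/2}A(x)$, recognize the right-hand side as a binomial transform with generating function $(1-16x)^{-1}A(-x/(1-16x))$, and reduce the lemma to the Pfaff transformation for ${}_2F_1(\tfrac12,\tfrac12;1;\cdot)$. The differences are cosmetic: the paper uses the normalized variable $z=16x$, and instead of citing Pfaff it verifies this instance directly from the Euler integral $\frac1\pi\int_0^1 dt/\sqrt{t(1-t)(1-zt)}$ via the substitutions $z\mapsto -z/(1-z)$ and $t\mapsto 1-t$.
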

\begin{proof}
Set $a_k=\binom{2k}{k}^2/16^k$ and
$A(z)=\sum_{k\ge0}a_kz^k$.  Directly from the definition of $G_n$,
\[
 \sum_{n\ge0}\frac{G_n}{16^n}z^n=(1-z)^{-1/2}A(z).
\]
The alternating binomial transform of $(a_k)$ has generating function
$(1-z)^{-1}A(-z/(1-z))$.  For $z$ near zero,
\[
 A(z)=\frac1\pi\int_0^1
 \frac{dx}{\sqrt{x(1-x)}\sqrt{1-zx}}.
\]
Replacing $z$ by $-z/(1-z)$ and then $x$ by $1-x$ yields
$A(-z/(1-z))=(1-z)^{1/2}A(z)$.  Coefficient comparison gives the claim.
\end{proof}

Fix an odd prime $p$.  Let $\OO=\Zp\langle a\rangle$ be the integral
Tate algebra, with its coefficient Gauss norm, and put
\[
 Df=a^3f(a)-(a+1/2)^3f(a+1),\qquad
 B_2f=(a+1/2)^2f(a+1),\qquad D_N=D+\frac N2B_2.
\]
Since the coefficients in \eqref{eq:BJN} are integral,
\begin{equation}\label{eq:Jintegral}
 \JJ_N(\OO)\subseteq\Zp.
\end{equation}

\section{Cancelled digit transfer}

Let $h=(p-1)/2$ and
$\boldsymbol\alpha(N)=((1-N)/2,1/2,1/2)$.  For integers $a\ge0$, set
\begin{equation}\label{eq:VN}
 V_N(a)=
 \frac{\displaystyle\prod_{i=1}^3
  \prod_{\substack{0\le j<pa\\j\not\equiv h\ (p)}}
       (\alpha_i(N)+j)}
 {\displaystyle\left(\prod_{\substack{1\le j\le pa\\p\nmid j}}j\right)^3},
\end{equation}
and, for $0\le d<p$,
\[
\begin{aligned}
 g_{N,d}(x)&=\frac{\prod_{i=1}^3(x+\alpha_i(N))_d}{(x+1)_d^3},\\
 w_{N,d}(a)&=V_N(a)g_{N,d}(pa),\\
 (T_Nf)(a)&=\sum_{d=0}^{p-1}w_{N,d}(a)f(pa+d).
\end{aligned}
\]
The cancellation in \eqref{eq:VN} is performed before analytic continuation;
one never divides by a terminating zero.

\begin{lemma}[Exact endpoints]\label{lem:endpoints}
For every positive odd $M$,
\begin{equation}\label{eq:endpoints}
 \JJ_{pM}=\JJ_MT_{pM},\qquad \JJ_MD_M=0.
\end{equation}
\end{lemma}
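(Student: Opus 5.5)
The plan is to verify both identities in \eqref{eq:endpoints} by direct manipulation of the finite sums defining $\JJ$. Since $\JJ_N$ evaluates $f$ at finitely many nonnegative integers, both sides are finite linear combinations of point evaluations. It therefore suffices to compare the coefficients of each $f(k)$, and no analytic input is needed. The key tool throughout is the ratio of consecutive coefficients,
\[
 \frac{B_k(N)}{B_{k-1}(N)}=\frac{(k-\tfrac12-\tfrac N2)(k-\tfrac12)^2}{k^3},
\]
together with the splitting of Pochhammer symbols along residues mod $p$.

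\emph{The annihilation $\JJ_MD_M=0$.} I would expand
\[
 (D_Mf)(k)=k^3f(k)-(k+\tfrac12)^2(k+\tfrac12-\tfrac M2)f(k+1)
\]
and sum by parts. The coefficient of $f(k)$ in $\JJ_M D_M f$ is
\[
 B_k(M)k^3-B_{k-1}(M)(k-\tfrac12)^2(k-\tfrac12-\tfrac M2).
\]
This vanishes for $1\le k\le L_M$ by the ratio formula above. At $k=0$ the coefficient is $B_0\cdot 0^3=0$. The one boundary term $f(L_M+1)$ carries the factor $L_M+\tfrac12-\tfrac M2=0$. Hence every coefficient vanishes and $\JJ_MD_M=0$.

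\emph{The transfer identity $\JJ_{pM}=\JJ_MT_{pM}$.} I would write $k=pa+d$ with $0\le d<p$ and use $L_{pM}=pL_M+h$. The goal is the pointwise identity
\[
 B_{pa+d}(pM)=B_a(M)\,V_{pM}(a)\,g_{pM,d}(pa).
\]
Using $(\alpha)_{pa+d}=(\alpha)_{pa}(\alpha+pa)_d$ and $(1)_{pa+d}=(pa)!\,(pa+1)_d$, the factors with index $d$ are exactly $g_{pM,d}(pa)$. For the factors $(\alpha_i(pM))_{pa}$, I separate the indices $j=pi+h$ with $0\le i<a$. For $\alpha=\tfrac12$ one gets $\tfrac12+pi+h=p(i+\tfrac12)$. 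For $\alpha=(1-pM)/2$ one gets $p\bigl(i+\tfrac{1-M}2\bigr)$. These extracted indices contribute $p^{3a}\prod_i(\alpha_i(M))_a$. Similarly, the multiples of $p$ in $(pa)!^3$ contribute $p^{3a}(a!)^3$. The powers of $p$ cancel, and what remains is precisely $V_{pM}(a)B_a(M)$.

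\emph{Expected main obstacle: the range of summation.} This is the step where the "cancel before continuation" convention matters. For $a\le L_M$ and all $d$, terms with $pa+d>L_{pM}$ must vanish on the right-hand side. They do: for $a=L_M$ and $d>h$, the factor $(pa+(1-pM)/2)_d$ contains the zero term at $j=h$. For $a>L_M$ the factor $B_a(M)$ is zero. I also need to check that the terminating zero of $((1-pM)/2)_{pa}$ was correctly removed in $V_{pM}$. It was, because $(1-pM)/2+j=0$ forces $j\equiv h\pmod p$, which is excluded. So $V_{pM}(a)$ is a genuine nonzero product and no division by zero occurs. Summing the pointwise identity over $a,d$ then gives $\JJ_{pM}f=\JJ_M(T_{pM}f)$.
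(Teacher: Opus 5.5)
Your proposal is correct and follows the paper's proof essentially step for step. It uses the same block factorization $B_{pa+d}(pM)=B_a(M)V_{pM}(a)g_{pM,d}(pa)$, obtained by stripping the indices $\equiv h \pmod p$ and the multiples of $p$, together with $L_{pM}=pL_M+h$ and the zero factor $p(a+\tfrac12-\tfrac M2)$ that kills the digits $d>h$ in the terminal row $a=L_M$; for $\JJ_MD_M=0$ it uses the same telescoping via the consecutive-coefficient relation, with both boundary terms vanishing.
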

\begin{proof}
Removing from each numerator Pochhammer product the factors with indices
$h+p\ell$ and from each factorial the factors $p,2p,\ldots,pa$ gives
\[
 B_{pa+d}(N)=B_a(N/p)V_N(a)g_{N,d}(pa).
\]
For $N=pM$, $L_N=pL_M+h$.  In the terminal row $a=L_M$, every digit
$d>h$ contains the zero factor $p(a+1/2-M/2)$.  Grouping by $k=pa+d$
proves the first identity.  For the second, use
\[
 B_{k+1}(M)(k+1)^3
 =B_k(M)(k+1/2-M/2)(k+1/2)^2
\]
and telescope; both boundary terms vanish.
\end{proof}

We use the scaled Morita gamma estimate proved in
\cite[Sec.~2]{BaiPomerleanoSeidel}.  In the normalization below, applying
their coefficient estimate to the scaled argument $-pz$ gives the explicit
slope
\begin{equation}\label{eq:gamma-slope}
 \Gamma_p(-pz)=\sum_{m\ge0}e_mz\fall m,\qquad
 \vp(e_m)\ge\alpha_pm,\qquad
 \alpha_p=\frac{p^2-3p+1}{p(p-1)}>0.
\end{equation}
Here $z\fall m=z(z-1)\cdots(z-m+1)$.  The gamma functional equation,
applied after the cancellation in \eqref{eq:VN}, proves the following.

\begin{proposition}[Bidisc estimate]\label{prop:bidisc}
For $N=pU$, every weight has a jointly analytic expansion
\[
 w_{pU,d}(a)=\sum_{r,s\ge0}u_{d;r,s}a^rU^s,\qquad
 \vp(u_{d;r,s})\ge\alpha_p(r+s).
\]
The estimate holds in the two-variable Tate algebra and is preserved by
integral affine substitutions and inversion of the unit factors occurring
in \eqref{eq:VN}.
\end{proposition}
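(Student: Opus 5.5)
We express every weight $w_{pU,d}(a)$ as a ratio of Morita gamma values with arguments affine in $pa$ and $pU$, and then read off the coefficient decay from \eqref{eq:gamma-slope}.

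\emph{Step 1: the gamma expression for $V_N$.} For integers $a\ge0$ and $x\in\Zp$, Morita's definition gives
\[
 \prod_{\substack{0\le j<pa\\ p\nmid x+j}}(x+j)=(-1)^{pa}\frac{\Gamma_p(x+pa)}{\Gamma_p(x)} .
\]
Take $x=1/2$. The excluded indices $j\equiv h$ are exactly those with $p\mid 1/2+j$, so the two factors with $\alpha_i=1/2$ become $\Gamma_p(1/2+pa)/\Gamma_p(1/2)$, up to sign. For $\alpha_1=(1-pU)/2$ the excluded class is again $j\equiv h$. This holds because $(1-pU)/2\equiv 1/2 \pmod p$, and it is the point where $N=pU$ is used. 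That factor therefore becomes $\Gamma_p((1-pU)/2+pa)/\Gamma_p((1-pU)/2)$. Each cubed factorial is $\prod_{1\le j\le pa,\,p\nmid j}j=\pm\Gamma_p(1+pa)$. Altogether $V_{pU}(a)$ is, up to a sign constant in $a$, a product of the form
\[
 \frac{\Gamma_p(1/2+pa)^2\,\Gamma_p(1/2-pU/2+pa)}{\Gamma_p(1/2)^2\,\Gamma_p(1/2-pU/2)\,\Gamma_p(1+pa)^3}.
\]
Every argument has the shape $c+p\ell(a,U)$, with $c\in\{1/2,1\}$ and $\ell$ an integral affine form. This identity of integers for $a\in\mathbb Z_{\ge0}$ defines the analytic continuation. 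Since no terminating zero appears, no division by zero occurs. The sign $(-1)^{pa}$ pairs up across numerator and denominator (there are three of each), so no parity issue arises.

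\emph{Step 2: slope for each gamma factor.} I will use the functional equation to shift $c$ to a multiple of $p$. For instance, $\Gamma_p(1/2+pz)=\Gamma_p(1/2+p\cdot0)\cdot(\text{unit analytic correction})$ is not yet usable as it stands. Instead I will write $1/2=-p\,\beta$ with $\beta=-1/(2p)$, which is not integral. A cleaner route is to use $1/2\equiv h+1 \pmod p$ and write $\Gamma_p(1/2+pz)=\Gamma_p(-p(-z-\gamma))\cdot R(z)$, where $\gamma\in\Zp$ is chosen with $p\gamma=h+1-1/2$ up to the shift. The factor $R$ is a finite product of linear terms $(j+pz)$ with $p\nmid j$. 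Such a product is a unit times $1+p(\cdots)$, and its inverse expands with slope at least $1\ge\alpha_p$. The substituted variable $-z-\gamma$ is an integral affine substitution of $z=a$ or $z=a-U/2$. Converting falling factorials to monomials preserves the bound $\vp\ge\alpha_p m$ under such substitutions, because the coefficients of $(z+c)\fall m$ in terms of $z^r$ are integral and have degree at most $m$. So each numerator factor has an expansion in $(a,U)$ with $\vp(\text{coeff of }a^rU^s)\ge\alpha_p(r+s)$. For the denominators, $\Gamma_p(1+pa)$ is a unit at $a=0$ and equals $\Gamma_p(1)(1+\text{terms of slope}\ge\alpha_p)$, so its inverse, and likewise $\Gamma_p(1/2-pU/2)^{-1}$, inherit the slope by the geometric series. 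The class of two-variable series with $\vp(u_{r,s})\ge\alpha_p(r+s)$ and unit constant term is closed under products and inversion, because valuations of products add.

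\emph{Step 3: the digit factor $g_{pU,d}(pa)$.} This is a finite rational function. Its factors $(pa+1/2+i)$, $(pa+(1-pU)/2+i)$ and $(pa+1+i)$ have $0\le i<d\le p-1$. The denominator factors are units since $1\le 1+i\le p-1$. A numerator factor is nonunit only when $1/2+i\equiv0$, i.e.\ $i=h$. In that case it equals $p(a+\text{const})$ or $p(a-U/2+\text{const})$, which is linear with coefficients of valuation at least $1\ge\alpha_p$. Multiplying everything gives the claimed bidisc estimate.

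The main obstacle is Step 2. The work lies in getting the exact bookkeeping that puts each gamma argument in the form $-p\cdot(\text{integral affine})$ plus a unit correction, with correct handling of $1/2$ modulo $p$. One must also check that the BPS slope $\alpha_p$ survives the re-expansion from the falling-factorial basis to the monomial basis, and I would verify this first.
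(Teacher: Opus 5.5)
Your proposal is correct and follows essentially the paper's argument. You write the cancelled products as Morita gamma quotients with arguments $c+p\lambda(a,U)$. You move each to the residue disc of $0$ by the functional equation; cleanly, $\Gamma_p(1/2+pz)=(-1)^{h+1}\Gamma_p(-p(1/2-z))\prod_{j=1}^{h}(j+p(z-1/2))$ with $z=a$ or $z=a-U/2$. You then combine \eqref{eq:gamma-slope} with closure of the condition $\vp(u_{r,s})\ge\alpha_p(r+s)$ under products, inversion of series with unit constant term, and integral affine substitution.

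The only difference is packaging. The paper expresses these closure properties through the weighted Gauss norm on the bidisc of radius $p^\delta$, Cauchy estimates, and the limit $\delta\uparrow\alpha_p$. You track the coefficient condition directly, which is equivalent.
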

\begin{proof}
Fix $0<\delta<\alpha_p$ and use the weighted Gauss norm on
$|a|,|U|\le p^\delta$.  Falling factorials have integral coefficients,
so \eqref{eq:gamma-slope} makes each shifted scaled gamma factor a unit
congruent to $1$ modulo the maximal ideal.  Products, inverses and the
finitely many digit factors all have norm at most one.  Cauchy's coefficient
estimate gives $\vp(u_{d;r,s})\ge\delta(r+s)$; let
$\delta\uparrow\alpha_p$.
\end{proof}

Put $T=T_0$, $S=T1$, and
\[
 \Lambda=[N]T_N=p^{-1}[U]T_{pU},\qquad
 \LL=\Zp+p\OO.
\]
Here $[x]F(x)$ denotes the coefficient of $x$ in the Taylor expansion of
$F$ at $x=0$; in particular $[N]T_N=\left.\partial T_N/\partial
N\right|_{N=0}$.
The next proposition contains the arithmetic input.  Its proof is split into
the quotient calculation of \cref{sec:quotient}, the integral primitive of
\cref{sec:primitive}, and the exceptional calculation of
\cref{sec:p3}.

\begin{proposition}[Pure two-digit input]\label{prop:pure-input}
If $p\equiv3\pmod4$, then
\begin{equation}\label{eq:pure-input}
 S\in p^2\OO,\qquad \Lambda\OO\subseteq\LL,
 \qquad (T^2-p^2)1=Dq\quad\text{for some }q\in p^4\OO.
\end{equation}
For $p\ge19$ one may take $q\in p^5\OO$.
\end{proposition}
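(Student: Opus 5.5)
The three assertions of \eqref{eq:pure-input} call for different tools, and I will handle them in increasing order of difficulty.

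\textbf{The claim $S\in p^2\OO$.} Here $S=T1=\sum_d w_{0,d}(a)$ is an analytic function of $a$. By \cref{prop:bidisc}, the coefficient of $a^r$ has valuation at least $\alpha_p r$, so only finitely many low-order coefficients need to be shown divisible by $p^2$. To get these, I evaluate $S$ at nonnegative integers $a$. Via the Gross--Koblitz formula, the digit sum $\sum_{d}g_{0,d}(pa)$ multiplied by $V_0(a)$ becomes a truncated ${}_3F_2(\tfrac12,\tfrac12,\tfrac12;1,1)$-type sum modulo high powers of $p$. For $p\equiv3\pmod4$, the finite-field analogue of this sum, evaluated by Greene's Dixon identity, vanishes. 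This is the supersingular vanishing: it gives $S(a)\equiv0\pmod{p^2}$ at integers. The slope estimate then upgrades divisibility at the integers to divisibility of the coefficients in the Mahler basis, hence in $\OO$.

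\textbf{The claim $\Lambda\OO\subseteq\LL$.} Differentiate the weights in $U$ at $U=0$. Since $w_{pU,d}$ depends on $U$ only through $\alpha_1=(1-pU)/2$, each $U$-derivative brings down a factor $p$ times a logarithmic derivative of $\Gamma_p$ or of a Pochhammer factor. After dividing by $p$, the result is integral. Its reduction modulo $p$ is a constant multiple of evaluation at a point, and that constant lies in $\Zp$. I would check this constant using \eqref{eq:gamma-slope}, since all $U^1$ coefficients other than the constant term lie in $p\OO$.

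\textbf{The claim $(T^2-p^2)1=Dq$ with $q\in p^4\OO$.} This is the main obstacle, and I split it into a spectral step and an integrality step.

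For the spectral step, let $W$ be the closure of $D\OO$ and pass to the Hausdorff quotient $\OO/W$. Using the relation $\JJ_MD_M=0$ of \cref{lem:endpoints} at $M=0$, I identify this quotient as two-dimensional, spanned by the classes of $1$ and $S$. Since $T$ intertwines $D$ up to scalars, $T$ descends to the quotient. I compute its trace from the class of $T1$, and its second trace via $T^2$ and Gross--Koblitz, which turns the relevant coefficients into Jacobi-sum products. Greene's finite-field Dixon identity gives a trace of $0$ and a second trace of $2p^2$. Hence the characteristic polynomial is $X^2-p^2$, and Cayley--Hamilton gives $(T^2-p^2)1\in W$.

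For the integrality step, I must show that this element lies in the actual image $D(p^4\OO)$. I would build a Green inverse for $D$: solve $Dq=f$ by summing the telescoping relation along $a\mapsto a+1$, with denominators coming from $(a+1/2)^{-3}$ products. These lose at most $O(\log_p n)$ powers of $p$ at coefficient index $n$. The coefficient decay $\alpha_p n$ from \cref{prop:bidisc} beats this loss, which yields an analytic $q$. Starting from $(T^2-p^2)1\in p^{6}\OO$-ish plus a part in $W$, the budget gives $q\in p^5\OO$ for $p\ge19$, where the logarithmic loss is small relative to $\alpha_p$. For the primes $7\le p<19$ I would check directly that at least $p^4$ survives. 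For $p=3$ the slope $\alpha_3=1/6$ is too weak, so I would compute finitely many coefficients exactly in PARI/GP and bound the tail symbolically using \eqref{eq:gamma-slope}.
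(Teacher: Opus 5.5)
Your overall architecture matches the paper's: a two-dimensional quotient with characteristic polynomial $X^2-p^2$, a Green inverse, and a finite certificate at $3$. However, two load-bearing steps are missing.

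\textbf{Integrality.} The spectral argument only places $E_0=(T^2-p^2)1$ in $\overline{D\Aalg}$ and carries no $p$-adic size information. A Green inverse merely transfers divisibility of the Taylor coefficients $t_n$ of $E_0$ to $q$. The slope bound $\vp(t_n)\ge\alpha_pn$ says nothing about $t_0,\dots,t_5$, and your ``$p^6\OO$-ish'' has no source. The paper obtains $E_0\in p^5\OO$ for $p\ge7$ from the pointwise congruence $T^21(a)\equiv p^2\pmod{p^5}$ at every integer $a\ge0$. That is a substantial two-block finite-product computation in Appendix~A: the unweighted row identity, odd Dixon and its derivatives, residue identities for harmonic moments, and the front and rear ranges at the origin. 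It is followed by a Vandermonde upgrade using $6\alpha_p>4$.

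Your budget is also wrong. The logarithmic Green bound permits a loss of six powers as soon as $2n+1\ge p$; at $p=19$, $n=9$ it leaves only $9-6=3$. So it cannot yield $p^5$. The paper instead uses the descending-elimination bound $\|Q_n\|\le p^{\vp((2n-1)!!)}$ of \cref{lem:double-factorial}, which is lossless for $n<(p+1)/2$.

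The same issue affects $S$. Finite-field vanishing of Greene's ${}_3F_2$ by itself gives only a congruence modulo $p$ for $S(0)$. The second power, and the values at $a\ge1$ (which are not truncated ${}_3F_2$'s at all), need the row identity and the derivative of odd Dixon with $\vp(c_m)=1$.

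\textbf{The quotient trace.} The spectral step lacks its actual mechanism. In your basis $\{1,S\}$, whose spanning you have not shown, the class of $T1$ is itself a basis vector and carries no information. The trace is then the $S$-coefficient of the class of $T^21$, which is exactly the unknown, so Gross--Koblitz has nothing to act on.

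The missing idea is a Lefschetz-type trace formula. The intertwining is $TD=p^3DK$ with a second operator $K$, not a scalar. Since $T$ and $K$ are completely continuous and $D$ is injective with dense image in $\ker\Pi$, Serre's Fredholm theory gives $\operatorname{tr}(M_{p,\infty}^r)=\operatorname{Tr}(T^r)-p^{3r}\operatorname{Tr}(K^r)$. Each weighted composition $w(a)f(p^ra+d)$ then has trace $w(a_*)/(1-p^r)$ at its fixed point. Only these fixed-point weights are converted to Jacobi sums; the first trace comes from Morita-reflection pairing, not from Dixon.

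Your identification of the quotient via $\JJ_MD_M=0$ ``at $M=0$'' is unavailable. $\JJ_M$ is defined only for positive odd $M$, and functionals killing $D_M\OO$ do not describe the cokernel of $D$. The paper instead uses polynomial division and the moment functionals $\gamma,\beta$, whose continuity rests on \eqref{eq:momentbound}.

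\textbf{The prime $3$.} Your arguments for $S$ and $\Lambda$ fail at $p=3$. With $\alpha_3=1/6$, the upgrade for $S$ needs nodes $0,\dots,6$, whose Vandermonde $\prod_{j\le6}j!$ is not a $3$-adic unit. The slope also does not push the higher $U$-coefficients of $\Lambda$ into $9\mathbb Z_3$. The paper does explicit computations there: the logarithmic series \eqref{eq:p3-log}, and $S=VQ$ with $Q(0)=603/512$ and $Q'(0)=1719/1024$.
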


\section{The analytic quotient and its exact spectrum}\label{sec:quotient}

Let $\Aalg=\Qp\langle a\rangle$ and $u=a+1/4$.  Polynomial division gives
\[
 \Qp[a]/D\Qp[a]=\Qp[1]\oplus\Qp[u].
\]
Write $f\equiv\gamma(f)+\beta(f)u\pmod{D\Qp[a]}$.
For $g_n=\gamma\binom an$ and $b_n=\beta\binom an$, one obtains
\begin{align}
 \sum_{n\ge0}g_nt^n&={}_2F_1(1/4,1/4;1/2;-t)^2,\label{eq:momgen1}\\
 \sum_{n\ge0}b_nt^n&=t\,{}_2F_1(3/4,3/4;3/2;-t)^2.\label{eq:momgen2}
\end{align}
Indeed, applying either quotient functional to $D(1+t)^a$ gives
\[
8t(1+t)^2F'''+12(1+t)(3t+1)F''+2(13t+9)F'+F=0,
\]
and \eqref{eq:momgen1}--\eqref{eq:momgen2} are the two solutions with
the required initial pairs.  Counting numerator and denominator roots at
each power of $p$ gives
\begin{equation}\label{eq:momentbound}
 \vp(g_n),\vp(b_n)\ge-2\lfloor\log_p(2n+1)\rfloor.
\end{equation}
Consequently the Newton expansions
\[
 \gamma(f)=\sum_{n\ge0}\Delta^nf(0)g_n,\qquad
 \beta(f)=\sum_{n\ge0}\Delta^nf(0)b_n
\]
converge for $f\in\Aalg$.  They define bounded functionals and the projection
$\Pi f=\gamma(f)+\beta(f)u$ satisfies
\begin{equation}\label{eq:quotient}
 \overline{D\Aalg}=\ker\Pi,\qquad
 \Aalg/\overline{D\Aalg}\simeq\Qp[1]\oplus\Qp[u].
\end{equation}

For the pure weights write
\[
 V(a)=-\left\{\frac{\Gamma_p(1/2+pa)}
 {\Gamma_p(1/2)\Gamma_p(1+pa)}\right\}^{\!3},
 \qquad (Kf)(a)=V(a)f(pa).
\]
Termwise telescoping gives the exact intertwining relation
\begin{equation}\label{eq:intertwine}
 TD=p^3DK.
\end{equation}
Thus $T$ induces an endomorphism $M_{p,\infty}$ of the quotient
\eqref{eq:quotient}.

\begin{proposition}[Exact quotient spectrum]\label{prop:spectrum}
For every prime $p\equiv3\pmod4$,
\[
 \operatorname{tr}M_{p,\infty}=0,\qquad
 \operatorname{tr}(M_{p,\infty}^2)=2p^2,\qquad
 M_{p,\infty}^2=p^2I.
\]
In particular,
\begin{equation}\label{eq:zeroquotient}
 \Pi((T^2-p^2)1)=0.
\end{equation}
\end{proposition}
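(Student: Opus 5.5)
The plan is to compute the $2\times2$ matrix of $M_{p,\infty}$ in the basis $\{1,u\}$ of the quotient \eqref{eq:quotient}, using the functionals $\gamma,\beta$. First, \eqref{eq:intertwine} shows that $T$ preserves $\overline{D\Aalg}=\ker\Pi$. Since $T$ is bounded on $\Aalg$ by \cref{prop:bidisc}, $M_{p,\infty}$ is well defined, and its entries are $\gamma(T1),\beta(T1),\gamma(Tu),\beta(Tu)$.

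For the trace, write $\operatorname{tr}M_{p,\infty}=\gamma(T1)+\beta(Tu)$. Then expand $(Tf)(a)=\sum_d w_{0,d}(a)f(pa+d)$ and apply the Newton series for $\gamma,\beta$ together with the moment generating functions \eqref{eq:momgen1}--\eqref{eq:momgen2}. The natural interpretation is as follows. The functional $\gamma$ (respectively $\beta$) is a regularized evaluation of the period ${}_3F_2(1/2,1/2,1/2;1,1;\cdot)$ (respectively its companion) at the singular point. Composing with $T$ splits the sum over $k=pa+d$ by base-$p$ digits, producing a Frobenius-type matrix. Concretely, I would use the endpoint identity \cref{lem:endpoints} in the limit $N\to0$ to read $\gamma\circ T$ and $\beta\circ T$ as $p$-adic limits of the truncated sums $\JJ_{pM}$ against $\JJ_M$. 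In the quotient, the trace of the induced map should then equal a Dwork-type trace: a sum over Teichmüller digits of products of $\Gamma_p$ values, via the cancelled weights $V(a)$ and $g_{0,d}$. Using Gross--Koblitz, this becomes a finite-field hypergeometric sum ${}_3F_2(\phi,\phi,\phi;\varepsilon,\varepsilon\,|\,1)$ over $\mathbb F_p$, with $\phi$ the quadratic character. Greene's finite-field Dixon identity evaluates this sum. For $p\equiv3\pmod4$ there is no character of order $4$, and the sum vanishes, giving trace $0$.

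For $\operatorname{tr}(M^2)$, the same procedure applied to $T^2$ (two digits at a time, i.e.\ base $p^2$) yields the analogous sum over $\mathbb F_{p^2}$. There $\phi$ lifts to the quadratic character of $\mathbb F_{p^2}$ and a quartic character now exists. Greene's Dixon evaluation expresses the answer through Jacobi sums $J(\chi,\chi^{-1}\phi)$, which for the supersingular prime are pure Gauss sums equal to $\pm p$ (Stickelberger/Gross--Koblitz). Tracking signs should give $2p^2$. Since $\dim=2$, the characteristic polynomial is $X^2-(\operatorname{tr}M)X+\tfrac12((\operatorname{tr}M)^2-\operatorname{tr}M^2)=X^2-p^2$, so Cayley--Hamilton gives $M^2=p^2I$. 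Applying this to the class of $1$ gives $\Pi(T^21)=p^2\Pi(1)$, i.e.\ \eqref{eq:zeroquotient}.

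The main obstacle is justifying the identification of the analytic trace with the finite-field sum. The functionals $\gamma,\beta$ have logarithmic denominator growth \eqref{eq:momentbound}, so one must show that the Newton series of $T^j$ applied to $1,u$ converges and that interchanging it with the digit decomposition is legitimate. I would first reduce to congruences modulo $p^k$ for each $k$, using $\alpha_p>0$ from \eqref{eq:gamma-slope} to truncate the expansions; the loss $p^{-2\log_p n}$ is beaten by the linear decay $p^{-\alpha_p n}$. Then I would pass to the limit, so that both traces become exact identities in $\Qp$ rather than merely congruences.
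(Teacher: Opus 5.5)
Your final identification is the right one (for $r=1,2$ one has $\operatorname{tr}(M_{p,\infty}^r)=q^2\mathcal F_q$ with $q=p^r$), but the step you call ``the main obstacle'' is where the proof actually lives, and your sketch does not reach it. The entries $\gamma(T1),\beta(Tu),\dots$ are infinite Newton series in $\Gamma_p$-weights and the moments $g_n,b_n$, with no evident closed form. Truncating them modulo $p^k$ settles convergence, but it gives no reason for their sum to collapse to a finite sum over Teichm\"uller digits.

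What produces digit sums is a Lefschetz-type trace formula for completely continuous operators. It has two ingredients that your plan lacks.
First, on the whole Tate algebra a weighted composition $Bf(a)=w(a)f(p^ra+d)$ has $\operatorname{Tr}B=w(a_*)/(1-p^r)$ with $a_*=-d/(p^r-1)$. Hence $\operatorname{Tr}(T^r)$ is $(1-p^r)^{-1}$ times the sum of the fixed-point weights over digit strings.
Second, and essentially, the contribution of the closed invariant subspace $\ker\Pi=\overline{D\Aalg}$ must be removed. The paper gets this from the intertwining \eqref{eq:intertwine}, the injectivity of $D$, and a Riesz/Fredholm comparison. That comparison is needed because $D\Aalg$ is only known to be dense. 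Together these give $\det(1-tT|_{\ker\Pi})=\det(1-p^3tK)$, and hence
\[
 \operatorname{tr}(M_{p,\infty}^r)=\operatorname{Tr}(T^r)-p^{3r}\operatorname{Tr}(K^r),\qquad \operatorname{Tr}(K^r)=\frac1{1-p^r}.
\]

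Without this subtraction your accounting is wrong, because the digit sum is not what vanishes. One has $\sum_d w_d(a_d)=p^3$, so $\operatorname{Tr}T=p^3/(1-p)\neq0$. The trace $\operatorname{tr}M_{p,\infty}$ is $0$ only because this term cancels $p^3\operatorname{Tr}K$.

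The same happens at the second trace. There $\sum_{j=0}^{N}P_j^3=q^3-\sum_\chi J(\phi,\chi)^3$, where the $q^3$ comes from the extra endpoint digit $j=N$ with $P_N=q$. The Gross--Koblitz identification $P_j=-J(\phi,\omega^{-j})$ must also be checked separately at $j=0$ and $j=N/2$. Only after the $K$-correction removes $q^3$ does one get $\sum_\chi J(\phi,\chi)^3/(q-1)=q^2\mathcal F_q=2p^2$.

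``Tracking signs'' cannot replace this bookkeeping. Likewise, you do not develop the limit $M\to0$ of $\JJ_{pM}=\JJ_MT_{pM}$ into anything that computes a trace. Your Greene--Dixon evaluations themselves are fine. The paper actually obtains the first trace by a Morita-reflection pairing of digits rather than by Dixon over $\mathbb F_p$, but either works once the trace formula is in place.
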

\begin{proof}
Put $H=\ker\Pi=\overline{D\Aalg}$, $L=T|_H$, and $U=p^3K$.
We first justify the Fredholm comparison, since the image of $D$ is only
known to be dense in $H$.  The operator $D:\Aalg\to H$ is injective:
if $Df=0$, evaluation successively at $a=0,1,2,\ldots$ gives
$f(n)=0$ for every positive integer $n$, and the zeros $p^m\to0$ force
$f=0$.  Moreover, $T$, $K$, and $L$ are completely continuous.  Indeed,
$f(a)\mapsto f(pa)$ is the operator-norm limit of its finite-rank
truncations in the monomial basis, while translation and multiplication
by the analytic weights are bounded; for $L$ one composes the
finite-rank approximations to $T$ with $1-\Pi$ and restricts to $H$.

The intertwining relation \eqref{eq:intertwine} says $LD=DU$.  We claim
that
\begin{equation}\label{eq:fredholm-dense-comparison}
 \det(1-tL)=\det(1-tU).
\end{equation}
Let $\lambda\ne0$, extending scalars to a finite extension of $\Qp$ if
necessary.  If $\lambda-U$ is invertible, then
\[
 (\lambda-L)D\Aalg=D(\lambda-U)\Aalg=D\Aalg,
\]
so the range of $\lambda-L$ is dense.  By the nonarchimedean Fredholm
alternative it is also closed and has index zero; hence
$\lambda-L$ is invertible.  Conversely, a nonzero vector in
$\ker(\lambda-U)$ is carried by the injective map $D$ to one in
$\ker(\lambda-L)$.

The same argument preserves algebraic multiplicities.  Let
$\Aalg=E_U\oplus F_U$ and $H=E_L\oplus F_L$ be the Riesz decompositions
at $\lambda$, and let $P_L$ be the projection onto $E_L$.  If
$(L-\lambda)^mE_L=0$, then for $x\in F_U$, writing
$x=(U-\lambda)^my$, we obtain
\[
 P_LDx=(L-\lambda)^mP_LDy=0.
\]
Thus $P_LD\Aalg=D(E_U)$.  The left-hand side is dense in $E_L$, whereas
$D(E_U)$ is finite-dimensional and therefore closed.  Hence
$D:E_U\to E_L$ is an isomorphism intertwining the two restrictions, so
all Jordan multiplicities agree.  Serre's Proposition~12 and its
corollary \cite[pp.~80--82]{Serre} give the Riesz and Fredholm assertions
used here, and the normalized Fredholm determinant is determined by its
nonzero zeros with multiplicity as in the proof of his Proposition~15
\cite[p.~84]{Serre}.  This proves \eqref{eq:fredholm-dense-comparison}.
Applying Fredholm multiplicativity to the closed invariant subspace $H$
(Serre's Lemma~2 \cite[pp.~77--78]{Serre}) and then using the formal
logarithmic determinant identity (Serre's Proposition~7,
Corollary~3) gives
\begin{align}
 \det(1-tM_{p,\infty})
   &=\frac{\det(1-tT)}{\det(1-p^3tK)},\label{eq:fredholm-quotient}\\
 \operatorname{tr}(M_{p,\infty}^r)
   &=\operatorname{Tr}(T^r)-p^{3r}\operatorname{Tr}(K^r)
   \qquad(r\ge1).\label{eq:trace-quotient}
\end{align}
No bounded inverse for $D$ and no equality $D\Aalg=H$ is used.

We next record the fixed-point trace entering
\eqref{eq:trace-quotient}.  If
$Bf(a)=w(a)f(p^ra+d)$, its fixed point is
$a_*=-d/(p^r-1)$.  Translating $a_*$ to the origin makes the diagonal
entries in the monomial basis equal to $w(a_*)p^{rn}$, and hence
\begin{equation}\label{eq:weighted-fixed-point}
 \operatorname{Tr}B=\frac{w(a_*)}{1-p^r}.
\end{equation}
For a digit string ${\bf d}=(d_0,\ldots,d_{r-1})$, set
\[
 m({\bf d})=\sum_{i=0}^{r-1}p^{r-1-i}d_i,
 \qquad
 a_i=p^ia+\sum_{s=0}^{i-1}p^{i-1-s}d_s.
\]
The corresponding summand of $T^r$ has weight
$\prod_{i=0}^{r-1}w_{d_i}(a_i)$ and fixed point
$a=-m({\bf d})/(p^r-1)$, so \eqref{eq:weighted-fixed-point} gives the
trace of $T^r$ by summing over all digit strings.  For $K^r$ the fixed
point is zero and the product of pure weights is one, since $V(0)=1$;
therefore
\begin{equation}\label{eq:K-fixed-trace}
 \operatorname{Tr}(K^r)=\frac1{1-p^r}.
\end{equation}

We evaluate the first trace explicitly.  Write $h=(p-1)/2$, which is
odd, and put
\[
 U_0(a)=-\frac{\Gamma_p(a+1/2)}
 {\Gamma_p(1/2)\Gamma_p(a+1)},
 \qquad B_0(a)=(a+1/2)U_0(a),
 \qquad a_d=-\frac d{p-1}.
\]
The block cancellation in the definition of the weights and the Morita
functional equation give
\begin{equation}\label{eq:first-fixed-weights}
 w_d(a_d)=
 \begin{cases}
  U_0(a_d)^3,&0\le d\le h,\\
  p^3B_0(a_d)^3,&h<d\le2h.
 \end{cases}
\end{equation}
The only exceptional numerator factor is the factor with index $h$,
namely $p(a+1/2)$, and it occurs precisely for $d>h$; the denominator
has no exceptional factor because $d<p$.

Let $r(x)\in\{1,\ldots,p\}$ be the least positive residue, with
$r(0)=p$.  Morita reflection is
$\Gamma_p(x)\Gamma_p(1-x)=(-1)^{r(x)}$.  On $0\le d\le h$, pair
$d$ with $d'=h-d$.  Then $a_{d'}=-1/2-a_d$ and
\[
 \frac{U_0(a_{d'})}{U_0(a_d)}
 =(-1)^{r(-a_d)-r(1/2-a_d)}
 =(-1)^{(p-d)-(h+1-d)}=-1.
\]
On $h<d<2h$, pair $d$ with $d'=3h-d$.  Since the relevant arguments
are units, the functional equation and reflection give
\[
 \frac{B_0(a_{d'})}{B_0(a_d)}
 =-(-1)^{r(-a_d)-r(1/2-a_d)}
 =-(-1)^{(p-d)-(p+h+1-d)}=-1.
\]
Neither involution has a fixed point because $h$ is odd.  The only
unpaired term is $d=2h=p-1$, where $a_d=-1$,
$U_0(-1)=-2$, and $B_0(-1)=1$.  Thus
\begin{equation}\label{eq:first-fixed-sum}
 \sum_{d=0}^{p-1}w_d(a_d)=p^3.
\end{equation}
Equations \eqref{eq:weighted-fixed-point},
\eqref{eq:K-fixed-trace}, and \eqref{eq:trace-quotient} now yield
$\operatorname{tr}M_{p,\infty}=0$.

It remains to calculate the second trace.  Put
$q=p^2$, $N=q-1$, $H=N/2$, $c=\Gamma_p(1/2)$, and
\[
 z_d(a)=-\frac{\Gamma_p(pa+1/2)}{c\Gamma_p(pa+1)}
       \frac{(pa+1/2)_d}{(pa+1)_d},
 \qquad w_d(a)=z_d(a)^3.
\]
For $j=pd+e\in\{0,\ldots,N\}$ set
\[
 j'=pe+d,\qquad a=-\frac jN,\qquad
 b=pa+d=-\frac{j'}N,\qquad P_j=z_d(a)z_e(b).
\]
The fixed-point formula for the $p^2$-affine summands of $T^2$ gives
\begin{equation}\label{eq:T2-fixed-trace}
 \operatorname{Tr}(T^2)=\frac{\Sigma_{p,2}}{1-p^2},
 \qquad \Sigma_{p,2}:=\sum_{j=0}^{N}P_j^3.
\end{equation}
We now derive the finite-field expression for $P_j$, retaining all
carry and endpoint cases.

Let $\Gamma=\Gamma_p$ and
\[
 U(a)=-\frac{\Gamma(a+1/2)}{c\Gamma(a+1)},\quad
 u=a+\frac12,\quad v=b+\frac12,\quad
 \epsilon_d={\bf1}_{d>h},\quad \epsilon_e={\bf1}_{e>h}.
\]
Stripping the unique possible nonunit factor before taking fractional
parts gives the exact identity
\begin{equation}\label{eq:gamma-stripping}
 P_j=p^{\epsilon_d+\epsilon_e}u^{\epsilon_d}v^{\epsilon_e}
 \frac{\Gamma(u)\Gamma(v)}{c^2\Gamma(1+a)\Gamma(1+b)}.
\end{equation}
Here
$a\equiv e$, $b\equiv d\pmod p$,
$u\in p\Zp$ exactly when $e=h$, and
$v\in p\Zp$ exactly when $d=h$.
For $0<j<N$, $j\ne H$, write
\[
 x=\frac jN,\quad y=\frac{j'}N,\quad
 L_x={\bf1}_{x>1/2},\quad L_y={\bf1}_{y>1/2},\quad
 \kappa=L_x+L_y,
\]
and
\[
 F_j=\frac{\Gamma(\langle1/2-x\rangle)
                 \Gamma(\langle1/2-y\rangle)}
 {c^2\Gamma(1-x)\Gamma(1-y)},
 \qquad \langle t\rangle=t-\lfloor t\rfloor.
\]
Applying the Morita functional equation only to negative arguments in
\eqref{eq:gamma-stripping}, the four exhaustive carry cases are
\[
\begin{array}{c|c|c|c}
\text{digit condition}&(L_x,L_y)&\text{remaining factor}&P_j\\ \hline
 d\ne h,\ e\ne h&(\epsilon_d,\epsilon_e)&1&(-p)^\kappa F_j\\
 d=h,\ e<h\ \text{or}\ e=h,\ d<h&(0,0)&1&F_j\\
 d=h,\ e>h&(1,1)&v/u=p&p^2F_j\\
 e=h,\ d>h&(1,1)&u/v=p&p^2F_j
\end{array}
\]
Thus $P_j=(-p)^\kappa F_j$ throughout $0<j<N$ away from $H$.
At the omitted middle index $j=H$ we have $a=b=-1/2$; using
\eqref{eq:gamma-stripping} before introducing negative powers of $u,v$
gives
\begin{equation}\label{eq:three-exceptional-P}
 P_H=c^{-4}=1,
 \qquad P_0=1,
 \qquad P_N=[p(-1/2)U(-1)]^2=q.
\end{equation}
Here $c^2=(-1)^{h+1}=1$ and $U(-1)=-2$.

Let $\omega$ be the Teichm\"uller character of $\Fq$, let
$\phi=\omega^H$, and choose $\pi^{p-1}=-p$ compatibly with the additive
character.  For $0\le k<N$, Gross--Koblitz in our convention reads
\begin{equation}\label{eq:GK-exact}
 g(\omega^{-k})=-\pi^{s_p(k)}
 \Gamma_p(k/N)\Gamma_p(\langle pk/N\rangle).
\end{equation}
For $k\ne0$ this is exactly Gross--Koblitz
\cite[Thm.~1.7, pp.~570--571]{GrossKoblitz}, after accounting for the
minus sign built into their Gauss-sum convention.  The case $k=0$ is
not obtained by extending that theorem beyond its stated range: it is
checked directly from $g(\varepsilon)=-1$ and $\Gamma_p(0)=1$.
In particular $g(\phi)=p$.

For $0<j<N$, put $k=N-j$ and let $t$ represent $H-j\pmod N$ in
$\{0,\ldots,N-1\}$.  Rotation of the two base-$p$ digits gives
\[
 \frac{s_p(k)}{p-1}=2-x-y,
 \qquad
 \frac{s_p(t)}{p-1}=1-x-y+\kappa.
\]
Applying \eqref{eq:GK-exact} to $k,t,H$ therefore yields
\[
 F_j=-\pi^{s_p(k)+(p-1)-s_p(t)}
 \frac{g(\phi\omega^j)}{g(\phi)g(\omega^j)}.
\]
The exponent is $(p-1)(2-\kappa)$; multiplication by
$(-p)^\kappa$ gives
\begin{equation}\label{eq:P-gauss-ratio}
 P_j=-q\frac{g(\phi\omega^j)}{g(\phi)g(\omega^j)}
 \qquad(0<j<N).
\end{equation}
For $\omega^j\ne\varepsilon,\phi$, the Gauss--Jacobi identities and
$\phi(-1)=1$ convert \eqref{eq:P-gauss-ratio} to
$P_j=-J(\phi,\omega^{-j})$.  At the two exceptional characters,
$J(\phi,\varepsilon)=J(\phi,\phi)=-1$, so the direct values
$P_0=P_H=1$ give the same formula.  The extra digit endpoint $j=N$
remains separate and contributes $P_N=q$.  We have proved
\begin{equation}\label{eq:sigma-jacobi}
 P_j=-J(\phi,\omega^{-j})\quad(0\le j<N),
 \qquad P_N=q,
 \qquad
 \Sigma_{p,2}=q^3-\sum_\chi J(\phi,\chi)^3.
\end{equation}

For completeness, we evaluate the last moment with its exceptional terms
visible.  Greene's conventions are
\[
 \binom AB=\frac{B(-1)}qJ(A,\bar B),
 \qquad
 \mathcal F_q:={}_3F_2\!\left(
 \begin{matrix}\phi,&\phi,&\phi\\&\varepsilon,&\varepsilon\end{matrix}
 \middle|1\right)_q
 =\frac q{q-1}\sum_\chi\binom{\phi\chi}{\chi}^{\!3}.
\]
A M\"obius substitution gives
$\binom{\phi\chi}{\chi}=q^{-1}J(\phi,\bar\chi)$ for every $\chi$,
including $\varepsilon$ and $\phi$.  Hence
\begin{equation}\label{eq:jacobi-cube-hypergeom}
 \sum_\chi J(\phi,\chi)^3=q^2(q-1)\mathcal F_q.
\end{equation}
These are Greene's Definition~2.4, identity~(2.6), and
Definition~3.10 \cite[pp.~80, 83]{Greene}.

Because $q\equiv1\pmod4$, choose a quartic character $\rho$ with
$\rho^2=\phi$.  Specializing Greene's finite-field Dixon theorem
\cite[Thm.~4.37, p.~97]{Greene} with $A=B=C=\phi$ and $D=\rho$,
the three correction terms contain respectively
$\delta(A)$, $\delta(B)$, and $\delta(C\overline{AB})$.
All three arguments equal the nontrivial character $\phi$, so every
correction term is zero.  The two remaining products reduce, using
Greene's identity~(2.6), to
\[
 \mathcal F_q
 =\binom{\rho}{\phi}^{\!2}
  +\binom{\bar\rho}{\phi}^{\!2}
 =\frac{J(\rho,\phi)^2+J(\bar\rho,\phi)^2}{q^2}.
\]
Since $p\equiv-1\pmod4$, Frobenius interchanges $\rho$ and $\bar\rho$;
the change of variable $x\mapsto x^p$ therefore gives
$J(\rho,\phi)=J(\bar\rho,\phi)$.  The two sums are complex conjugates,
and the standard Jacobi norm identity gives their product $q$.
Consequently each square is $q$, so
\[
 \mathcal F_q=\frac2q,
 \qquad
 \sum_\chi J(\phi,\chi)^3=2q(q-1).
\]
Together with \eqref{eq:sigma-jacobi}, this proves
\[
 \Sigma_{p,2}=q^3-2q(q-1)=p^6+2p^2(1-p^2).
\]
Finally, \eqref{eq:trace-quotient}, \eqref{eq:K-fixed-trace}, and
\eqref{eq:T2-fixed-trace} give
\[
 \operatorname{tr}(M_{p,\infty}^2)
 =\frac{\Sigma_{p,2}-p^6}{1-p^2}=2p^2.
\]
Since the quotient is two-dimensional and its first trace is zero,
$\det M_{p,\infty}=-\tfrac12\operatorname{tr}(M_{p,\infty}^2)=-p^2$.
Its characteristic polynomial is therefore $X^2-p^2$, and
Cayley--Hamilton yields $M_{p,\infty}^2=p^2I$.  Applying this identity
to the class of $1$ proves \eqref{eq:zeroquotient}.
\end{proof}

\begin{remark}
At $p=3$ the second-trace identity can be checked without invoking the
general finite-field Dixon formula.  In $\mathbb F_9=\mathbb F_3(i)$,
the eight Jacobi sums have cubes summing to $144$.  Hence
$\Sigma_{3,2}=729-144=585$ and
$\operatorname{tr}(M_{3,\infty}^2)=(585-729)/(1-9)=18$.
\end{remark}

\section{From zero quotient to an actual primitive}\label{sec:primitive}

Write $F_n=a\fall n$ and $P_k=(a-1)\fall k$.  A direct calculation gives
\begin{equation}\label{eq:three-term}
 DP_k=-A_k^*F_k-B_k^*F_{k+1}-C_k^*F_{k+2},
\end{equation}
where
\[
 A_k^*=\frac{(2k+1)^3}{8},\quad
 B_k^*=\frac{8(k+1)^2+1}{4},\quad
 C_k^*=\frac{2k+3}{2}.
\]
Let $G_n=n!g_n$, $H_n=n!b_n$.  They satisfy the homogeneous recurrence
associated with \eqref{eq:three-term}, and their Wronskian is
\begin{equation}\label{eq:wronskian}
 W_n=G_nH_{n+1}-G_{n+1}H_n
 =\frac{(n!)^2\binom{2n}{n}^2}{16^n(2n+1)}.
\end{equation}
For $0\le k\le n-2$, set
\[
 c_{n,k}=-\frac{G_{k+1}H_n-H_{k+1}G_n}{C_k^*W_{k+1}}.
\]
Then
\begin{equation}\label{eq:green}
 F_n=D\left(\sum_{k=0}^{n-2}c_{n,k}P_k\right)+G_n+H_nu.
\end{equation}
The index $k+1$ in the Wronskian is forced by the inhomogeneous initial
condition at $n=k+2$.

\begin{lemma}[Logarithmic-loss inverse]\label{lem:green-bound}
For each $n\ge2$ there are unique polynomials $Q_n,R_n$ such that
\[
 a^n=DQ_n+R_n,\qquad \deg R_n\le1,
\]
and, for every odd prime $p$,
\[
 \|Q_n\|\le p^{6\lfloor\log_p(2n+1)\rfloor}.
\]
If $f=\sum t_na^n$ has $\vp(t_n)\ge\sigma n-C$ with $\sigma>0$ and
$\Pi f=0$, then $q_f=\sum_{n\ge2}t_nQ_n$ converges on every disc of
radius $|a|_p\le p^\delta$ with $0<\delta<\sigma$, and satisfies
$Dq_f=f$.
\end{lemma}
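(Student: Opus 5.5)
The plan has three parts. First, expand $a^n$ in the falling-factorial basis and apply the Green identity \eqref{eq:green} to each $F_m$, which gives existence of $Q_n$ and $R_n$. Second, bound the Green coefficients $c_{m,k}$ using \eqref{eq:momentbound} and the Wronskian \eqref{eq:wronskian}. Third, sum the resulting series and pass to the limit using $\Pi f=0$.

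\textbf{Existence and uniqueness.} Write
\[
a^n=\sum_{m\le n}S(n,m)F_m,
\]
where the $S(n,m)$ are Stirling numbers of the second kind, which are integers. Applying \eqref{eq:green} to each $F_m$ with $m\ge2$ gives
\[
Q_n=\sum_{m=2}^n S(n,m)\sum_{k=0}^{m-2}c_{m,k}P_k,
\]
together with a remainder of degree at most one. For uniqueness, note that $D$ raises degree by exactly two, since the $a^2$ terms of $a^3f(a)-(a+1/2)^3f(a+1)$ do not cancel in general, and that $D$ is injective by the argument in \cref{prop:spectrum}. So $D\Qp[a]\cap\{\deg\le1\}=0$. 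This is also consistent with the splitting $\Qp[a]/D\Qp[a]=\Qp[1]\oplus\Qp[u]$, which forces $R_n=\gamma(a^n)+\beta(a^n)u$.

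\textbf{The norm bound.} Since the $P_k$ have integral coefficients and the $S(n,m)$ are integers, it suffices to show
\[
\vp(c_{m,k})\ge-6\lfloor\log_p(2m+1)\rfloor .
\]
Writing $G_n=n!g_n$ and $H_n=n!b_n$, we get
\[
c_{m,k}=-\frac{m!\,(k+1)!\,(g_{k+1}b_m-b_{k+1}g_m)}{C_k^*W_{k+1}},
\qquad
W_{k+1}=\frac{((k+1)!)^2\binom{2k+2}{k+1}^2}{16^{k+1}(2k+3)}.
\]
Here $C_k^*=(2k+3)/2$ cancels the factor $2k+3$ in $W_{k+1}$. The factorials combine into $m!/(k+1)!$, which is integral. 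What remains is
\[
\frac{g_{k+1}b_m-b_{k+1}g_m}{\binom{2k+2}{k+1}^2},
\]
up to $2$-adic units.

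By \eqref{eq:momentbound}, the numerator has valuation at least $-2\lfloor\log_p(2k+3)\rfloor-2\lfloor\log_p(2m+1)\rfloor\ge-4\lfloor\log_p(2m+1)\rfloor$. By Kummer's theorem, $\vp\binom{2k+2}{k+1}\le\lfloor\log_p(2k+2)\rfloor$, so the denominator costs at most $2\lfloor\log_p(2m+1)\rfloor$. The total loss is therefore at most $6\lfloor\log_p(2m+1)\rfloor$, as claimed.

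\textbf{Main obstacle.} I expect the hardest step to be justifying the moment bound uniformly enough to use it inside the Wronskian ratio. The worry is that cancellation in $g_{k+1}b_m-b_{k+1}g_m$ is not needed, but denominators of $g$ and $b$ must really be counted per power of $p$. If \eqref{eq:momentbound} holds as stated, the argument is pure bookkeeping.

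\textbf{Convergence.} On $|a|\le p^\delta$ the Gauss norm of $Q_n$ is at most $p^{6\log_p(2n+1)+\delta n}$. This is because $Q_n$ has degree $n-2$ and the change from the $P_k$ basis back to monomials is integral. Hence
\[
|t_nQ_n|\le p^{C-(\sigma-\delta)n+6\log_p(2n+1)}\longrightarrow0,
\]
so $q_f=\sum_{n\ge2}t_nQ_n$ converges on that disc, and $D$ can be applied termwise by continuity.

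Termwise application gives $Dq_f=f-t_0-t_1a-\sum_{n\ge2}t_nR_n$. The right-hand side equals $f-\Pi f$. To see this, note that $\Pi$ is continuous on $\Aalg$ and kills $DQ_n$, since $\Pi D=0$ on polynomials by \eqref{eq:quotient}. Therefore $\Pi f=t_0+t_1\Pi(a)+\sum_{n\ge2}t_nR_n$, where $\Pi(a)=a$ because $a$ already has degree at most one. Since $\Pi f=0$, we conclude $Dq_f=f$.
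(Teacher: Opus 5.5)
Your proposal is correct and follows the paper's own proof essentially step for step. The shared steps are: the Stirling change to falling factorials; the Green formula, where $C_k^*$ cancels the factor $2k+3$ of the Casoratian to give $c_{n,k}=-2\cdot16^m n!\,(g_mb_n-b_mg_n)/\bigl(m!\binom{2m}{m}^2\bigr)$ with $m=k+1$; the moment bound plus Kummer for the loss $6\lfloor\log_p(2n+1)\rfloor$; and identifying the convergent linear remainder with the continuous projection $\Pi f$. The only loose phrase is in your uniqueness step: the $a^{j+2}$-coefficient of $D(a^j)$ is $-(j+3/2)\neq0$, so $D$ raises the degree of every nonzero polynomial by exactly two and no appeal to \cref{prop:spectrum} is needed.
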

\begin{proof}
The full Green-kernel calculation is in
Appendix~\ref{app:green-denominators}.  Briefly, the factor $2m+1$ in
$C_k^*$ cancels against the denominator of the Casoratian
\eqref{eq:wronskian}, leaving
\[
 c_{n,k}=-\frac{2\cdot16^mn!}{m!\binom{2m}{m}^2}
 (g_mb_n-b_mg_n),\qquad m=k+1.
\]
The moment bound and Kummer's carry formula give the stated logarithmic
loss.  Hence the primitive series converges on the indicated discs.
Termwise polynomial division leaves a convergent linear remainder, and that
remainder is exactly the continuous projection $\Pi f$; it is zero by
hypothesis.  This constructs an actual primitive and does not assume that
$D\Aalg$ is closed.
\end{proof}

For $p\ge7$, the remaining integral estimate follows from finite-product
Dixon arithmetic.  We record the exact form needed here.

\begin{lemma}[Low-degree arithmetic]\label{lem:large-prime-arithmetic}
Let $p\equiv3\pmod4$ and $p\ge7$, and write
$E_0=(T^2-p^2)1=\sum t_na^n$.  Then $S\in p^2\OO$ and
$E_0(a)\in p^5\Zp$ for $a=0,1,\ldots,5$.  Consequently
$E_0\in p^5\OO$.
\end{lemma}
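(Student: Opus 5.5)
The plan is to prove the three assertions in order: $S\in p^2\OO$ first, then the six point values of $E_0$, and finally the passage from point values to $E_0\in p^5\OO$. The common tool is an interpolation principle: by \cref{prop:bidisc} every weight, and hence $S$ and $E_0$, lies in $\OO$ with coefficient slope $\vp(t_n)\ge\alpha_pn$. Forward differences $\Delta^nf(0)=\sum_m t_m\,n!\,S(m,n)$ and the inverse relation $t_m=\sum_{n\ge m}\Delta^nf(0)\,s(n,m)/n!$, with $S(m,n)$ and $s(n,m)$ the Stirling numbers of the second and first kind, translate between Taylor coefficients and Newton coefficients. Whether the finitely many values at $a=0,\dots,5$ suffice then becomes a question of whether the slope estimate already controls all Newton coefficients of index $n\ge6$.

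\textbf{Step 1: $S\in p^2\OO$.} Write $S(a)=\sum_{d=0}^{p-1}V(a)^{\ast}g_{0,d}(pa)$ using \eqref{eq:VN} with $N=0$. I would repeat the pairing from the first-trace computation in the proof of \cref{prop:spectrum}, but now with $a$ as a variable rather than at the fixed points. For $0\le d\le h$ pair $d\leftrightarrow h-d$, and for $h<d<2h$ pair $d\leftrightarrow 3h-d$. Morita reflection and the functional equation should give $w_{d'}(a)\equiv -w_d(a)$ modulo $p^2$ uniformly in $\OO$. Here the derivative in $a$ costs only factors $p$ through the argument $pa$. The digits $d>h$ already carry the factor $p^3$ coming from $p(a+1/2)$, and the unpaired digit $d=2h$ is handled by the same factor. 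The pairs therefore cancel to order $p^2$. As a check, the value at $a=0$ must agree with the $p^3$ identity \eqref{eq:first-fixed-sum} after the fixed-point shift.

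\textbf{Step 2: the values $E_0(a)$ for $0\le a\le5$.} For integer $a$ the weights become finite products: $T^2$ evaluated at an integer $a$ is a finite sum over two digits of ratios of Pochhammer products at the integer points $p^2a+pd+e$. Following the paper's remark, this is ``finite-product Dixon arithmetic''. Group the digit pairs $(d,e)$ as in \eqref{eq:gamma-stripping} and the carry table. Each value $T^21(a)$ should then become a truncated ${}_3F_2(\tfrac12,\tfrac12,\tfrac12;1,1)$-type sum of length $\approx p^2(a+1)$. A truncated Dixon evaluation (Greene's finite-field Dixon identity lifted through Gross--Koblitz, or a direct $p$-adic Dixon/Whipple congruence) should give $p^2$ plus an error in $p^5\Zp$. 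Since only six values of $a$ are needed, I would treat each $a$ by the same argument. The $a$-dependence enters only through the unit factor $V(a)$, which is congruent to its value at $a=0$ modulo a power of $p$ controlled by the slope.

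\textbf{Step 3: from point values to $E_0\in p^5\OO$; this is the main obstacle.} The values at $0,\dots,5$ give $\Delta^nE_0(0)\in p^5\Zp$ for $n\le5$. For $n\ge6$ and $p\ge7$ one has $\vp(n!)=\lfloor n/p\rfloor+\cdots$. The bare slope $\alpha_p$ gives $\vp(\Delta^nE_0(0))-\vp(n!)\ge\alpha_pn-\vp(n!)$, and this is at least $5$ once $n\ge8$ for every $p\ge7$. It can fail only at $n=6,7$ for small $p$: for example $6\alpha_7\approx4.14$.

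I would close this gap by improving the slope of $E_0$ itself rather than of a single weight. In $T^21=\sum_{d,e}w_d(a)\,w_e(pa+d)$ the inner factor has slope $\alpha_p+1$. The outer factors, after the Step~1 cancellation, carry the extra $p^2$, so $\vp(t_n)\ge\min(\alpha_pn+2,\;(\alpha_p+1)n)$ for the non-constant part. That bound exceeds $5$ for $n\ge6$. If this refinement falls short at $p=7$, I would add $a=6,7$ as extra finite checks of the same Dixon type.
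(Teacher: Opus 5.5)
Your Step~1 rests on a cancellation that is false, and Step~2, which is the actual content of the lemma, is asserted without any mechanism.

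\textbf{Step 1.} Pairing $d\leftrightarrow h-d$ cancels the weights modulo $p$ but not modulo $p^2$, and this fails already at $a=0$. There $w_d(0)=A_d=\binom{2d}{d}^3/64^d$. From $A_d\equiv(-1)^d\binom hd^3(1+\tfrac{3p}{2}\ell_d)\pmod{p^2}$ with $\ell_d=H_{h-d}-H_h$ one gets
\[
 A_d+A_{h-d}\equiv\tfrac{3p}{2}(-1)^d\binom hd^3\,(H_{h-d}-H_d)\pmod{p^2},
\]
which is typically of exact order $p$. For $p=7$ one finds $A_0+A_3\equiv7$ and $A_1+A_2\equiv-7\pmod{49}$. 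Only the full sum over $0\le d\le h$ lies in $p^2\Zp$, and that is a global fact: the differentiated odd Dixon identity $\sum_d(-1)^d\binom hd^3H_d=-c_m/6$ with $\vp(c_m)=1$. Equivalently, $\mathcal P(-h)=0$ and $\mathcal P'(-h)=c_m/2$, Taylor-expanded to $-h+p/2$. The same identity controls the linear coefficient, after which $2\alpha_p>1$ and two nodes give $S\in p^2\OO$. The Morita pairing in the first-trace computation compares $w_d$ and $w_{d'}$ at the reflected points $a_{d'}=-1/2-a_d$, so it does not transfer to a common variable $a$. Likewise \eqref{eq:first-fixed-sum} evaluates the weights at different points, so it says nothing about $S(0)$ beyond modulo $p$.

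\textbf{Step 2.} For integer $a$ one has $T^21(a)=A_a^{-1}\sum_{k=0}^{p^2-1}A_{p^2a+k}$, and the claim $T^21(a)\equiv p^2\pmod{p^5}$ is the whole difficulty. Greene's Dixon theorem via Gross--Koblitz evaluates the weights at the Teichm\"uller points $-j/(p^2-1)$. That is why the paper uses it for traces; it gives nothing about values at integers to precision $p^5$.

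Nor does the $a$-dependence enter only through $V(a)$. We have $T^21(a)=V(a)\sum_dg_{0,d}(pa)S(pa+d)$, and the slope bound yields only $V(a)\equiv V(0)\pmod p$, an error in $p^3$. In fact $V(a)\equiv1-pa\,q_p(64)\pmod{p^2}$ genuinely varies, so the compensation must come from the inner sum.

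What is missing is the finite-product calculation of Appendix~\ref{app:large-prime}. Writing $E_0=p^2(TL-1)$ with $L=S/p^2$, it determines $S\bmod p^5$ as a cubic with graded valuations and expands $A_{pa+d}/A_a$ to second order. It then proves $(TL)(a)\equiv(TL)(0)\pmod{p^3}$ through explicit cancellations:
\begin{itemize}
\item residue identities for $(h!)^3/(x)_{h+1}^3$;
\item shift identities removing the Newton curvature;
\item the relation $\sigma_0\equiv\tfrac32\sigma_1$;
\item the half-harmonic congruence, which kills the linear term $-2\sigma_0Q^2M+Q^2/2$.
\end{itemize}
Finally it proves $(TL)(0)\equiv1\pmod{p^3}$ by Dixon at $z_0=-(p^2-1)/2$, with summed cancellation in the cubic and quartic layers and the reciprocal-moment reflection $R_3\equiv4R_2$ in the rear range.

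\textbf{Step 3.} The ``obstacle'' here is not real. Since $t_n\in\Zp$, the bound $\vp(t_n)\ge\alpha_pn$ with $6\alpha_p>4$ already forces $t_n\in p^5\Zp$ for $n\ge6$. Your Newton-basis estimate also overcounts, because $\Delta^nf(0)/n!=\sum_{m\ge n}t_mS(m,n)$ loses no factorial. A Vandermonde inversion on the degree-five truncation, with unit determinant $\prod_{j\le5}j!$, then finishes exactly as in the paper.
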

\begin{proof}
The finite-product calculation is given in
Appendix~\ref{app:large-prime}.  Its two outputs are the pointwise congruence
$T^21(a)\equiv p^2\pmod {p^5}$ for every nonnegative integer $a$, and
$S(0),S(1)\in p^2\Zp$.  The calculation uses classical terminating Dixon
and differentiated odd-Dixon identities; \cite{Tauraso} is cited only for
those one-block identities, not for the present two-block congruence.

We record here the analytic upgrade, since it is logically independent of
the finite calculation.  By \cref{prop:bidisc},
$\vp(t_n)\ge\alpha_pn$, and $6\alpha_p>4$ for $p\ge7$; hence
$t_n\in p^5\Zp$ for $n\ge6$.  Subtracting this tail shows that the
degree-five truncation takes values in $p^5\Zp$ at $0,\ldots,5$.  Its
Vandermonde determinant is $\prod_{j=1}^{5}j!$, a $p$-adic unit, so its
six coefficients lie in $p^5\Zp$.  Thus $E_0\in p^5\OO$.
The same argument with two nodes and $2\alpha_p>1$ gives $S\in p^2\OO$.
\end{proof}

\begin{lemma}[Triangular denominator budget]\label{lem:double-factorial}
For the polynomials $Q_n,R_n$ determined by
\[
 a^n=DQ_n+R_n,\qquad \deg R_n\le1,
\]
one has
\[
 \deg Q_n\le n-2,\qquad
 \|Q_n\|,\|R_n\|\le p^{B_p(n)},\qquad
 B_p(n)=\vp((2n-1)!! )\le\frac{2n}{p-1}.
\]
\end{lemma}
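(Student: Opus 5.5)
The plan is to prove the lemma by an explicit triangular reduction that lowers the degree by one at each step. The only facts needed are the leading-term behaviour of $D$ on monomials and the observation that $D$ has coefficients in $\mathbb Z[1/2]\subseteq\Zp$ (since $p$ is odd).

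First compute the top term of $D$ on a polynomial $f$ of degree $k$ with leading coefficient $c$. Write
\[
 Df=-a^3\bigl(f(a+1)-f(a)\bigr)-\bigl(\tfrac32a^2+\tfrac34a+\tfrac18\bigr)f(a+1).
\]
The $a^{k+3}$ terms cancel, and the coefficient of $a^{k+2}$ is $-kc-\tfrac32c=-\tfrac{2k+3}{2}\,c$. So $D$ maps polynomials of degree $k$ to polynomials of degree exactly $k+2$, with leading coefficient multiplied by $-(2k+3)/2$. This already gives the uniqueness clause. If $DQ+R=DQ'+R'$ with $\deg R,\deg R'\le1$, then $D(Q-Q')$ has degree $\le1$. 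By the degree count this forces $Q=Q'$, and then $R=R'$; alternatively one can use the injectivity of $D$ established in the proof of \cref{prop:spectrum}.

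Existence and the degree bound come from the reduction algorithm. Set $r^{(n)}=a^n$. Given a remainder $r^{(m)}$ of degree $\le m$ with $m\ge2$, let $\ell_m$ be its $a^m$-coefficient and put
\[
 r^{(m-1)}=r^{(m)}-D\Bigl(-\tfrac{2\ell_m}{2m-1}\,a^{m-2}\Bigr).
\]
This polynomial has degree $\le m-1$. Stop at $m=1$; then $R_n=r^{(1)}$, and $Q_n$ is the sum of the subtracted monomials $-\tfrac{2\ell_m}{2m-1}a^{m-2}$ for $m=n,\dots,2$, which visibly has degree $\le n-2$.

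For the norm bound, use that $D$ is norm-nonincreasing on $\OO$ (its coefficients are $2$-power fractions, hence $p$-adic units or integral). Then
\[
 \|r^{(m-1)}\|\le \max\Bigl(\|r^{(m)}\|,\ \bigl|\tfrac{\ell_m}{2m-1}\bigr|\Bigr)\le p^{\vp(2m-1)}\|r^{(m)}\|.
\]
By induction, $\|r^{(m-1)}\|\le p^{\vp((2n-1)(2n-3)\cdots(2m-1))}$. The same quantity bounds the monomial added to $Q_n$ at step $m$. Taking the worst step $m=2$ gives
\[
 \|Q_n\|,\|R_n\|\le p^{\vp(3\cdot5\cdots(2n-1))}=p^{B_p(n)}.
\]
Finally, $(2n-1)!!=(2n)!/(2^nn!)$, so $B_p(n)\le\vp((2n)!)<2n/(p-1)$ by Legendre's formula.

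The only delicate point is the leading-coefficient calculation, where the factor must come out as exactly $(2k+3)/2$ rather than something with further $p$-divisible factors. It is also important to charge the growth multiplicatively at each step, not additively, so the bound is the full odd double factorial and nothing smaller is claimed. I expect no other obstacle; the result is consistent with $C_k^*=(2k+3)/2$ in \eqref{eq:three-term}.
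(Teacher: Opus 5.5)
Your proof is correct and is essentially the paper's argument. Both use descending elimination driven by the leading term $-\tfrac{2j+3}{2}a^{j+2}$ of $D(a^j)$, charge each pivot $(2m-1)/2$ exactly once and multiplicatively, and finish with Legendre's formula for $\vp((2n-1)!!)$.

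One small aside: the parenthetical appeal to injectivity of $D$ would not by itself give uniqueness. Injectivity does not rule out $D(Q-Q')$ being a nonzero polynomial of degree at most one, but your degree count already settles this.
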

\begin{proof}
This is the descending elimination proved in
Appendix~\ref{app:green-denominators}.  The important point is that every
nonunit pivot $(2j+3)/2$ is paid for at most once; all other coefficients of
$D(a^j)$ are $p$-integral.
\end{proof}

Combining \cref{prop:spectrum,lem:green-bound,lem:large-prime-arithmetic}
gives $Dq=E_0$.  Descending polynomial division gives the sharper bound
$\|Q_n\|\le p^{v_p((2n-1)!!)}\le p^{2n/(p-1)}$ by
\cref{lem:double-factorial}.  It follows termwise that
$q\in p^5\OO$ for $p\ge19$, while $q\in p^4\OO$ for $p=7,11$.
Moreover $\alpha_p>1/2$ for $p\ge7$, so the first parameter coefficient
satisfies $\Lambda\OO\subseteq\LL$.  This proves
\cref{prop:pure-input} away from $p=3$.

\section{The exceptional prime three}\label{sec:p3}

At $p=3$, $\alpha_3=1/6$.  This remains enough for convergence but not for
the preceding quick first-jet estimate.  Pairing the retained factors
$6b+1$ and $6b+5$ gives
\begin{equation}\label{eq:p3-log}
 \frac{[N]V_N(a)}{V(a)}
 =-\frac65\sum_{k\ge0}\left(-\frac{36}{5}\right)^kP_k(a),\qquad
 P_k(a)=\sum_{b=0}^{a-1}(2b+1)[b(b+1)]^k.
\end{equation}
The polynomial $P_k$ has degree at most $2k+2$, vanishes at zero, and its
Taylor denominators divide $(2k+2)!$.  Since
$v_3((2k+2)!)\le k$, the series \eqref{eq:p3-log} converges in
$3a\OO$.  Differentiating the three remaining digit factors yields
\[
 Tf\equiv f(0)-f(1),\qquad \Lambda f\equiv f(1)\pmod{3\OO}.
\]
Thus $T\OO,\Lambda\OO\subseteq\LL$.  A direct calculation gives
$S=VQ$, where, on putting $x=3a$,
\[
 Q(a)=1+\left(\frac{x+1/2}{x+1}\right)^3+
 \left\{\frac{(x+1/2)(x+3/2)}{(x+1)(x+2)}\right\}^{\!3}.
\]
Thus $Q$ is a rational function of $x=3a$ whose denominator is a unit
in $\mathbb Z_3\langle a\rangle$.  Consequently the coefficient of $a^n$ in
$Q$ is divisible by $3^n$.  Direct evaluation gives
\[
 Q(0)=\frac{603}{512}=9\frac{67}{512},\qquad
 Q'(0)=\frac{1719}{1024}=9\frac{191}{1024}.
\]
The coefficients of degrees $n\ge2$ are already divisible by $3^n$, hence
by $9$; the two displayed values deal with degrees zero and one.  Therefore
$Q\in9\OO$.  Since $V\in1+3a\OO$, it follows that $S=VQ\in9\OO$.

It remains to prove that the primitive of $E_0=(T^2-9)1$ lies in
$81\OO$.  Write $E_0=\sum t_na^n$.  The gamma bound and
\cref{lem:green-bound} show that for $n\ge199$,
\[
 \vp(t_nQ_n)\ge
 \left\lceil\frac n6\right\rceil
 -6\lfloor\log_3(2n+1)\rfloor\ge4.
\]
Thus only $2\le n\le198$ require finite certification.

\begin{proposition}[Finite $3$-adic certificate]\label{prop:p3-cert}
For $2\le n\le198$, all coefficients of
$\sum t_nQ_n$ lie in $81\mathbb Z_3$.  The assertion is certified by exact
rational arithmetic followed by reduction modulo $3^{34}$; a second run
modulo $3^{38}$ gives the same normalized vector modulo $3^6$.
\end{proposition}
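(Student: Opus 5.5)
The statement is a finite exact computation, so the plan is to describe a reproducible algorithm and show that each of its truncations is rigorous. There are three pieces of input. The first is the coefficients $t_n$, $2\le n\le198$, of $E_0=(T^2-9)1$ in $\mathbb Z_3\langle a\rangle$, known modulo a fixed power of $3$. The second is the exact rational polynomials $Q_n$, obtained from the descending elimination of \cref{lem:double-factorial}. The third is the final check that the coefficient vector of $\sum_{n=2}^{198}t_nQ_n$ vanishes modulo $81$.

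By \cref{lem:green-bound}, $\|Q_n\|\le 3^{6\lfloor\log_3(2n+1)\rfloor}$. For $n\le198$ the exponent is at most $30$, so knowing each $t_n$ modulo $3^{34}$ determines $t_nQ_n$ modulo $3^4$. This is the reason for working modulo $3^{34}$. The $Q_n$ themselves are computed exactly over $\mathbb Q$ by dividing $a^n$ by $D$ from the top degree down. Every pivot is the leading coefficient $(2j+3)/2$ of $D(a^j)$ up to sign, so the recursion is explicit and integer-exact.

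To produce the $t_n$, I would expand $T^21=\sum_{d,e}w_d(a)\,w_e(3a+d)$ with the pure weights $w_d(a)=V(a)g_{0,d}(3a)$. The factor $g_{0,d}(3a)$ is a rational function in $x=3a$ whose denominator is a unit, so it expands exactly. The weight $V(a)=-\{\Gamma_3(1/2+3a)/(\Gamma_3(1/2)\Gamma_3(1+3a))\}^3$ is expanded through the Mahler/falling-factorial series \eqref{eq:gamma-slope} for $\Gamma_3(-3z)$, together with the shift identities used in \cref{prop:bidisc}. By \cref{prop:bidisc}, the coefficient of $a^n$ in each weight has valuation at least $n/6$. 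Hence to get every $t_n$ with $n\le198$ correct modulo $3^{34}$, it suffices to truncate all series at degree $198$ and to keep intermediate quantities modulo a slightly larger power. The gamma coefficients $e_m$ are themselves computed modulo $3^{34+c}$, where $c$ absorbs the denominators of converting falling factorials to monomials; those denominators are factorials, of valuation at most $198/2$, which the slope bound more than compensates. Substituting $3a+d$ into the second weight is an integral affine substitution, which \cref{prop:bidisc} allows.

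I expect the main obstacle to be the rigorous control of precision loss in the gamma expansion, rather than the final linear algebra. The safeguard is a second independent run modulo $3^{38}$, comparing the normalized output vectors (each coefficient divided by $81$) modulo $3^6$. If the two runs agree, precision was not silently lost in the first one. For the logical proof I would additionally bound the loss a priori, by adding the valuations of the factorial denominators to the slope bound $\lceil n/6\rceil$. Finally, the identity $\Pi E_0=0$ from \cref{prop:spectrum} gives an independent consistency check: the accumulated linear remainders $\sum t_nR_n$, together with the tail, must vanish to the working precision. After the certificate, combining it with the tail estimate for $n\ge199$ gives $q\in81\OO$, which completes \cref{prop:pure-input} at $p=3$.
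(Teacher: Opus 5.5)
Your architecture is the paper's. The bound $\|Q_n\|\le3^{30}$ for $n\le198$ fixes the working precision $3^{34}$, the $Q_n$ come from exact rational division, $E_0$ is assembled from the pure weights with $S(3a+d)$ substituted, and the final test is coefficientwise.

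The gap is in the only step that needs an argument: that the computed coefficients satisfy $t_n-\widetilde t_n\in3^{34}\mathbb Z_3$ for $n\le198$.
\begin{itemize}
\item The precision loss you budget for does not exist. Converting $z\fall m$ to monomials uses Stirling numbers of the first kind, which are integers, so no factorial denominators occur. Even if they did, the slope $1/6$ would not compensate them ($v_3(198!)=97$ against $198/6=33$).
\item Your fallback, agreement of the $3^{34}$ and $3^{38}$ runs, is not a proof. The paper says explicitly that the error argument, not the agreement, establishes $q_{\rm low}\in81\OO$. Worse, with the same degree-$198$ cutoff in both runs, the two computations discard identical terms, so their agreement is blind to truncation error.
\item You never say how the Mahler coefficients $e_m$ are to be produced.
\end{itemize}

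What is missing is the Gauss-norm error ball of Appendix~\ref{app:p3}.
\begin{itemize}
\item Take $e_m=(-3)^md_{3m}$ with $\exp(z+z^3/3)=\sum d_nz^n$.
\item Use the functional equation (your ``shift identities'') to write $V=(G(-a-1/2)/(G(-1/2)G(-a)(1+6a)))^3$. Then $G=\Gamma_3(-3z)$ is only evaluated at integral affine arguments, and every inverted factor is a unit. A direct substitution $z=-a-1/6$ would not be integral.
\item Dropping the terms of valuation at least $P$ gives an error in $3^P\mathbb Z_3\langle z\rangle$. Integral affine substitution, multiplication and inversion of units all preserve the ball $3^P\mathbb Z_3\langle a\rangle$.
\end{itemize}
The substitution $a\mapsto3a+d$ sends a discarded coefficient $s_k$ into the constant term as $s_kd^k$. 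So the cutoff must be tied to $P$: every discarded degree $k$ needs $\lceil k/6\rceil\ge P$, i.e.\ $k\ge6P-5$.

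Your cutoff $198$ is exactly this threshold for $P=34$, since $\lceil199/6\rceil=34$, so your first run is salvageable. It cannot deliver $3^{38}$ accuracy in the second run, which is why the paper retains Taylor degree $<6P$. With this argument supplied, your certificate coincides with the paper's.
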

\begin{proof}[Certificate specification]
Appendix~\ref{app:p3} specifies the exact gamma truncation, its propagated
$3^P$ error ball, the rational polynomial divisions and the coefficientwise
test.  In particular, the program checks the finite Taylor range
$2\le n\le198$; it does not test the original congruence at finitely many
values.  The vanishing of the accumulated linear remainder comes from
\cref{prop:spectrum}, not from the program.
\end{proof}

Together with the exact quotient vanishing \eqref{eq:zeroquotient}, the
finite certificate and the infinite tail give an actual
$q\in81\OO$ with $(T^2-9)1=Dq$.  This completes
\cref{prop:pure-input} at $p=3$.

\section{All-depth descent and proof of the theorem}

We now use only \cref{lem:endpoints,prop:bidisc,prop:pure-input}.
Every positive total-degree coefficient in \cref{prop:bidisc} is divisible
by $p$, and $f(pa+d)\equiv f(d)\pmod{p\OO}$.  Hence
$T_N\OO\subseteq\LL$ for $N\in p\Zp$.  Since
$T(c+pf)=cS+pTf$, we also have $T\LL\subseteq p\LL$.

Define
\begin{equation}\label{eq:YX}
 Y=\Lambda S+pT\Lambda1,\qquad
 X=Y-\frac{B_2q}{2p}.
\end{equation}
Then $Y,X\in p^2\LL$.  In bounded-operator norm write
\[
 T_{pM}=\sum_{n\ge0}M^nC_n,\qquad
 T_{p^2M}=\sum_{n\ge0}p^nM^nC_n,\qquad
 C_0=T,\ C_1=p\Lambda.
\]
If $b_n=\lceil\alpha_pn\rceil$, then $C_n\OO\subseteq p^{b_n}\OO$.
The coefficient of $M^n$ in $T_{pM}T_{p^2M}1$ is
$\sum_{i+j=n}p^jC_iC_j1$.  Its constant and linear parts are $T^21$ and
$pY$.  For $n\ge2$, the two edge terms have valuations at least
$b_n+2$ and $n+b_n$, while each mixed term has valuation at least
\[
 j+b_{n-j}+b_j\ge\max\{3,1+b_n\}.
\]
The bounds tend to infinity, even when $p=3$.  Therefore
\begin{equation}\label{eq:master}
 (T_{pM}T_{p^2M}-p^2)1
 =D_Mq+pMX+p^3M^2R(a,M),\qquad
 R\in\Zp\langle a,M\rangle.
\end{equation}

The same expansion gives the two lattice laws
\begin{equation}\label{eq:lattice-laws}
 T_N\LL\subseteq p\LL\quad(N\in p^2\Zp),\qquad
 T_N\LL\subseteq p\OO\quad(N\in p\Zp).
\end{equation}
Write $M=p^em$ with $p\nmid m$.  Iterating \cref{lem:endpoints}, with the
rightmost operator acting first, gives
\[
 \JJ_M=\JJ_mT_{pm}T_{p^2m}\cdots T_{p^em}.
\]
Starting from $X\in p^2\LL$, the first $e-1$ steps in
\eqref{eq:lattice-laws} gain one power and retain $\LL$; the last gains
one more and lands in $\OO$.  Hence
\begin{equation}\label{eq:JX}
 \JJ_M(X)\in p^{e+2}\Zp.
\end{equation}
For $e=0$, this follows directly from \eqref{eq:Jintegral}.

Apply $\JJ_M$ to \eqref{eq:master}.  The coboundary is annihilated by
\cref{lem:endpoints}, and \eqref{eq:JX} gives
\begin{equation}\label{eq:normalized}
 \TT(p^2M)-p^2\TT(M)\in p^{2e+3}\Zp.
\end{equation}
Starting the same descent from $1\in\LL$ also gives
$H(M)=\TT(M)4^{M-1}\in p^e\Zp$.  Finally,
\[
 H(p^2M)-p^2H(M)
 =4^{p^2M-1}\{\TT(p^2M)-p^2\TT(M)\}
 +p^2\{4^{(p^2-1)M}-1\}H(M).
\]
By Fermat and LTE,
$\vp(4^{(p^2-1)M}-1)\ge e+1$.  Both terms lie in
$p^{2e+3}\Zp$, proving \eqref{eq:strong-main}.  Taking
$M=mp^{r-2}$ proves \cref{thm:main}.

\section{Reproducibility}

The supplementary PARI/GP certificate program implements
\cref{prop:p3-cert}.  Its full filename is recorded in the accompanying
README.  It was run with PARI/GP 2.15.5 and a
256 MB stack.  The wrapper fixes the stack size, captures both output
streams, and fails on PARI error markers rather than trusting an exit code
or a printed \texttt{PASS}.  Saved runs at precisions $3^{34}$ and $3^{38}$
agree as described above.  No finite test of the original congruence is used
in the proof.

\appendix
\section{The large-prime finite-product calculation}
\label{app:large-prime}

This appendix supplies the calculation used in
\cref{lem:large-prime-arithmetic}.  Throughout,
$p=4m+3\ge7$, $h=(p-1)/2=2m+1$, and
\[
 A_n=\frac{(1/2)_n^3}{(n!)^3}
     =\frac{\binom{2n}{n}^3}{64^n}.
\]
Write $S=T1$, $L=S/p^2$ and $E_0=(T^2-p^2)1=p^2(TL-1)$.
No high-order Newton filtration is used below.

\subsection{A fixed-modulus representative for one digit block}

For nonnegative integers $a$, put
\[
 V(a)=\frac{A_{pa}}{A_a},\qquad
 U(a)=\sum_{d=0}^{p-1}
 \left(\frac{(-h+p/2+pa)_d}{(1+pa)_d}\right)^3.
\]
Then $S(a)=V(a)U(a)$.  Removing the factors divisible by $p$ from the
finite products gives
\begin{equation}\label{eq:app-logV}
 \log V(a)=-a\log(64^{p-1})+
 3\sum_{r\ge1}\frac{(-1)^{r+1}p^r}{r}
 H_{p-1}^{(r)}P_r(a),
 \quad
 P_r(a)=\sum_{b=0}^{2a-1}b^r-2\sum_{b=0}^{a-1}b^r.
\end{equation}
This identity is valid for every integer $a\ge0$; no restriction such as
$a<p$ is imposed.  For $r\ge5$, $p^r/r\in p^5\Zp$; for $r=4$ use
$H_{p-1}^{(4)}\in p\Zp$; and for $r=1,2,3$ use
\[
 H_{p-1}\in p^2\Zp,\qquad
 H_{p-1}^{(2)},H_{p-1}^{(3)}\in p\Zp.
\]
Since $P_1,P_2,P_3$ have degrees $2,3,4$, respectively, exponentiating
\eqref{eq:app-logV} proves that there is a polynomial $V_*$ with
\begin{equation}\label{eq:app-Vstar}
 V(a)\equiv V_*(a)\pmod {p^5},\quad
 \deg V_*\le4,\quad [a^j]V_*\in p^j\Zp\ (1\le j\le4).
\end{equation}

We need only the following exact unweighted finite-field row identity.  If
$g_d(x)=((1/2+x)_d/(1+x)_d)^3$, then
\begin{equation}\label{eq:app-row-zero}
 \sum_{d=0}^{p-1}g_d(x)=0\quad\text{in }\mathbb F_p(x).
\end{equation}
Indeed, $1/2=-h=h+1$ in $\mathbb F_p$ and
\[
 \frac{(z+h)_d}{(z)_d}=\frac{(z+d)_h}{(z)_h}
\]
shows that $(x+1)_h^3g_d(x)=P(x+d+1)$ with $P(y)=(y)_h^3$.
Write $P(y)=\sum_{k=0}^{3h}a_ky^k$.  Since
\[
 \sum_{c\in\mathbb F_p}c^i=
 \begin{cases}-1,&i>0\text{ and }p-1\mid i,\\0,&\text{otherwise},
 \end{cases}
\]
and $3h<2(p-1)$, the only potentially nonzero term in
$\sum_cP(x+c)$ comes from $i=p-1$ in the binomial expansion of
$(x+c)^k$.  For $p\le k\le3h<2(p-1)$, Lucas' theorem gives
$\binom{k}{p-1}\equiv0\pmod p$.  The sole remaining possibility
$k=p-1$ is a constant in $x$.  Evaluating the sum at $x=0$ shows that
this constant equals
$(h!)^3\sum_{d=0}^{h}A_d=0$ by the terminating odd Dixon identity.
Hence $\sum_{c\in\mathbb F_p}P(x+c)=0$.  Dividing by $(x+1)_h^3$ proves
\eqref{eq:app-row-zero}.  No weighted identity with an arbitrary
polynomial factor is asserted or used.
Expanding the finite two-variable row at $x=pa$ and at the parameter
shift $p/2$, equation \eqref{eq:app-row-zero} gives
\begin{equation}\label{eq:app-Ubound}
 v_p([a^j]U)\ge j+1\qquad(j\ge0).
\end{equation}
Indeed, before substitution this is a finite rational function whose
denominators are units at $(x,\varepsilon)=(0,0)$.  Its constant term
vanishes modulo $p$ by \eqref{eq:app-row-zero}; after
$(x,\varepsilon)=(pa,p/2)$ every occurrence of $a^j$ contributes $p^j$
and the vanished constant supplies the additional factor $p$.
The constant term gains one further power.  Put
\[
 \mathcal P(z)=\sum_{d=0}^{h}\frac{(z)_d^3}{(d!)^3},\qquad
 c_m=\frac{(-1)^m(6m+3)!(m!)^3}
 {(3m+1)!((2m+1)!)^3}.
\]
Odd Dixon summation and its first derivative give
\[
 \mathcal P(-h)=0,\qquad \mathcal P'(-h)=c_m/2,\qquad \vp(c_m)=1.
\]
The terms $d>h$ are in $p^3\Zp$, so Taylor expansion at $-h$, evaluated
at $-h+p/2$, gives $U(0)\in p^2\Zp$.  Combining
\eqref{eq:app-Vstar} with \eqref{eq:app-Ubound} yields
\begin{equation}\label{eq:app-Sshape}
 S(a)\equiv s_0+s_1a+s_2a^2+s_3a^3\pmod {p^5},\qquad
 \vp(s_0),\vp(s_1)\ge2,\quad\vp(s_2)\ge3,\quad\vp(s_3)\ge4.
\end{equation}
Consequently, in the Newton basis,
\begin{equation}\label{eq:app-Lshape}
 L(n)\equiv c_0+c_1n+pb_2\binom n2+c_3\binom n3\pmod {p^3},
 \qquad c_3\in p^2\Zp.
\end{equation}
Only degrees at most three occur, so the denominators $2!$ and $3!$ are
units.  This is the promised fixed-degree replacement for the unused
Newton ladder.

\subsection{The one-digit moments}

For $0\le d\le h$, set
\[
 D_d=H_{2d}-H_d,\quad Q=q_p(64),\quad E_d=6D_d-Q,
 \quad F_d=3(H_d^{(2)}-2H_{2d}^{(2)})+\frac{E_d^2}{2}.
\]
A direct finite-product expansion gives
\begin{equation}\label{eq:app-Kexp}
 \frac{A_{pa+d}}{A_a}\equiv A_d
 \left\{1+paE_d+p^2\left(\frac{aQ^2}{2}+a^2F_d\right)\right\}
 \pmod {p^3}.
\end{equation}
For $d>h$ the left side is in $p^3\Zp$.  Substitution of
\eqref{eq:app-Lshape} and \eqref{eq:app-Kexp} into $TL$ gives
\begin{equation}\label{eq:app-TLshape}
 (TL)(a)-(TL)(0)\equiv paX+
 p^2\{a(b_2M+Q^2/2)+a^2Z\}\pmod {p^3},
\end{equation}
where $M=\sum_{d=0}^{h}dA_d$ and
$X=\sum_{d=0}^{h}A_dE_dL(d)$.  More precisely,
\begin{equation}\label{eq:app-XYZ}
 \begin{aligned}
 X&=\sum_{d=0}^{h}A_dE_dL(d),\\
 Z&\equiv\sum_{d=0}^{h}A_d\{c_1E_d+F_dL(d)\}\pmod p.
 \end{aligned}
\end{equation}

To evaluate them, put
\[
 w_d=(-1)^d\binom hd^3,\qquad B_d=H_{h-d}-H_d.
\]
Modulo $p$ one has
\[
 A_d=w_d,\qquad E_d=3B_d,\qquad
 F_d=\widehat F_d:=\frac32(H_d^{(2)}+H_{h-d}^{(2)})+\frac92B_d^2.
\]
The differentiated odd-Dixon identity
\begin{equation}\label{eq:app-tauraso-input}
 \sum_{d=0}^{h}w_dH_d=-\frac{c_m}{6}
\end{equation}
and reflection $d\mapsto h-d$ show that the first two moments of $E_d$
vanish modulo $p$.  For the second moments, take residues of
\[
 R_h(x)=\frac{(h!)^3}{(x)_{h+1}^3}.
\]
At $x=-d$ its $y^{-1}$ coefficient, with $y=x+d$, is
$w_d\widehat F_d$.  The sums of residues of $R_h$ and $xR_h$ are zero,
and hence
\begin{equation}\label{eq:app-residues}
 \sum_{d=0}^{h}w_d\widehat F_d=0,\qquad
 \sum_{d=0}^{h}w_d(2d-h)\widehat F_d=-2c_m.
\end{equation}
Since $p\mid c_m$, these identities give $Z=0$ in
\eqref{eq:app-TLshape}.  To make the cancellation explicit, write
$L(d)\equiv u+\nu d\pmod p$.  Then
\[
 Z\equiv
 c_1\sum_{d=0}^{h}A_dE_d
 +u\sum_{d=0}^{h}A_dF_d
 +\nu\sum_{d=0}^{h}dA_dF_d\pmod p.
\]
The first sum is zero by reflection and
$\sum w_dB_d=c_m/3\in p\Zp$.  The first identity in
\eqref{eq:app-residues} kills the second sum.  The second identity,
together with $p\mid c_m$, kills the third.  Thus every summand in the
displayed expression for $Z$ vanishes separately; in particular
\begin{equation}\label{eq:app-Zzero}
 Z=0\qquad\text{in }\mathbb F_p.
\end{equation}

The Newton-curvature term is removed by the finite shift identity
\[
 \sum_{d=0}^{h}w_d\{d^2+d^3B_d\}=0,\qquad
 \sum_{d=0}^{h}w_d\{2d+3d^2B_d\}=h^2c_m/6.
\]
Both follow by shifting the index in the first sum and using
$w_{d+1}/w_d=-(h-d)^3/(d+1)^3$ and
$B_{d+1}=B_d-(h-d)^{-1}-(d+1)^{-1}$.  Consequently
\begin{equation}\label{eq:app-curvature}
 \sum_{d=0}^{h}\binom d2A_dE_d=-M\pmod p,
\end{equation}
so the $b_2$ contribution to $X/p$ cancels the term $b_2M$ in
\eqref{eq:app-TLshape}.

It remains to evaluate the affine part of $L$.  Put
\[
 S_0=\sum_{d=0}^{h}A_d,\qquad S_D=\sum_{d=0}^{h}A_dD_d,
 \qquad \sigma_0=p^{-2}S_0\pmod p,\qquad
 \sigma_1=p^{-1}S_D\pmod p.
\]
For \(0\le d\le h\), regrouping the next digit gives
\[
 S(d)=\sum_{e=0}^{p-1}\frac{A_{pd+e}}{A_d}.
\]
Reducing \eqref{eq:app-Kexp} modulo \(p^2\), the terms \(e>h\) vanish
and therefore
\[
 S(d)\equiv S_0+pd\sum_{e=0}^{h}A_eE_e\pmod {p^3}.
\]
Since
\[
 \sum_{e=0}^{h}A_eE_e=6S_D-QS_0
 \equiv6p\sigma_1\pmod {p^2},
\]
division by \(p^2\) gives the first-order affine row
\begin{equation}\label{eq:app-affine-pre}
 L(d)\equiv\sigma_0+6d\sigma_1\pmod p.
\end{equation}
We now derive the relation between its two coefficients.  Put
\[
 a_0=-h,\qquad
 \ell_d=\sum_{j=0}^{d-1}\frac1{a_0+j},\qquad
 q_d=\sum_{j=0}^{d-1}\frac1{(a_0+j)^2}.
\]
Taylor expansion of $\mathcal P(a)$ at $a_0$, using
$\mathcal P(a_0)=0$ and $\mathcal P'(a_0)=c_m/2$, gives
\begin{equation}\label{eq:app-sigma0-explicit}
 \sigma_0\equiv\frac{c_m}{4p}
 +\frac98\sum_{d=0}^{h}w_d\ell_d^2
 -\frac38\sum_{d=0}^{h}w_dq_d\pmod p.
\end{equation}
Let
\[
 O_d=\sum_{j=1}^{d}\frac1{2j-1},\qquad
 O_d^{(2)}=\sum_{j=1}^{d}\frac1{(2j-1)^2}.
\]
Since $p=2h+1$,
\[
 \ell_d=-2\sum_{j=1}^{d}\frac1{p-(2j-1)}
 \equiv2O_d+2pO_d^{(2)}\pmod {p^2},
\]
so that
\[
 D_d\equiv\frac{\ell_d-H_d}{2}-pO_d^{(2)}\pmod {p^2},
 \qquad q_d\equiv4O_d^{(2)}\pmod p.
\]
Also
\[
 A_d\equiv w_d\left(1+\frac{3p}{2}\ell_d\right)\pmod {p^2},
 \qquad\sum_{d=0}^{h}w_d\ell_d=\frac{c_m}{6}.
\]
It follows that
\begin{equation}\label{eq:app-sigma1-explicit}
 \sigma_1\equiv\frac{c_m}{6p}
 -\frac14\sum_{d=0}^{h}w_dq_d
 +\frac34\sum_{d=0}^{h}w_d(\ell_d^2-\ell_dH_d)\pmod p.
\end{equation}
The mixed sum is zero modulo $p$.  Indeed,
$\ell_d=-H_h+H_{h-d}$, and hence
\[
 \sum_{d=0}^{h}w_d\ell_dH_d
 =-H_h\sum_{d=0}^{h}w_dH_d
  +\sum_{d=0}^{h}w_dH_{h-d}H_d\equiv0\pmod p.
\]
The first term vanishes by \eqref{eq:app-tauraso-input} and
$p\mid c_m$, and the second by reflection.  Comparing
\eqref{eq:app-sigma0-explicit} and \eqref{eq:app-sigma1-explicit}
gives $\sigma_0\equiv3\sigma_1/2\pmod p$.  Thus
\eqref{eq:app-affine-pre} becomes
\begin{equation}\label{eq:app-affine}
 L(d)\equiv\sigma_0(1+4d)\pmod p.
\end{equation}
This calculation uses only the finite half-block and no information
about $E_0$.

We next lift the weighted moment by one $p$-adic digit.  Finite-product
expansion and harmonic reflection give
\[
 A_d\equiv w_d(1+3pO_d)\pmod {p^2},\qquad
 B_d\equiv H_h+2D_d+2pO_d^{(2)}\pmod {p^2}.
\]
The half-harmonic congruence is
\begin{equation}\label{eq:app-half-harmonic}
 (Q+3H_h)/p\equiv\frac92H_h^2\pmod p.
\end{equation}
Consequently
\begin{equation}\label{eq:app-AE-lift}
 A_dE_d\equiv w_d\left[
 3B_d+p\left\{9O_dB_d-\frac92H_h^2-6O_d^{(2)}\right\}
 \right]\pmod {p^2}.
\end{equation}
The leading weighted sum satisfies
\[
 3\sum_{d=0}^{h}(1+4d)w_dB_d=(2h+1)c_m=pc_m,
\]
and therefore vanishes modulo $p^2$.  Since $w_d(1+4d)$ is invariant
under $d\mapsto h-d$, only the symmetric part of the braces contributes;
it is $\widehat F_d/2-9H_h^2/2$.  Furthermore
$1+4d\equiv2(2d-h)\pmod p$, so \eqref{eq:app-residues} gives
\[
 \sum_{d=0}^{h}(1+4d)w_d\widehat F_d
 \equiv-4c_m\equiv0\pmod p.
\]
Because $\sum w_d=0$ and $\sum dw_d\equiv M\pmod p$,
$\sum(1+4d)w_d\equiv4M\pmod p$.  Dividing
\eqref{eq:app-AE-lift} by $p$, summing, and using
$Q\equiv-3H_h\pmod p$ proves
\begin{equation}\label{eq:app-lifted-moment}
 \frac1p\sum_{d=0}^{h}A_d(1+4d)E_d
 \equiv-18H_h^2M=-2Q^2M\pmod p.
\end{equation}
Finally, $(TL)(0)=1\pmod p$ and \eqref{eq:app-affine} give
$4\sigma_0M=1\pmod p$.  Hence the remaining linear coefficient in
\eqref{eq:app-TLshape} is
\[
 -2\sigma_0Q^2M+Q^2/2=0\pmod p.
\]
We have proved
\begin{equation}\label{eq:app-TLconstant}
 (TL)(a)=(TL)(0)\pmod {p^3}\qquad(a\in\mathbb Z_{\ge0}).
\end{equation}

\subsection{The origin and the Vandermonde upgrade}

Regrouping the two digit blocks at the origin gives
\[
 p^2(TL)(0)=\sum_{k=0}^{p^2-1}A_k.
\]
We recall the finite Dixon calculation at the precision required here.
Put $s=(p^2-1)/2$, $n=s/2$, $z_0=-s$, and $\delta=p^2/2$.  For
$\mathcal D(z)=\sum_{k=0}^{s}(z)_k^3/(k!)^3$, ordinary Dixon summation
and its first two derivatives give
\[
 \mathcal D(z_0)=\frac{(3n)!}{(n!)^3},\quad
 \frac{\mathcal D'(z_0)}{\mathcal D(z_0)}
 =\frac32(H_n-H_{3n}),\quad
 (\log\mathcal D)''(z_0)=\frac34(H_n^{(2)}-3H_{3n}^{(2)}).
\]
The omitted infinite-series terms have a triple zero at $z_0$, so these
first two derivatives agree with the gamma-quotient form of Dixon's
identity.  The cubic and quartic layers require a summed pole estimate.
Put
\[
 W_k=(-1)^k\binom sk^3,
 \qquad
 C_r=\sum_{k=0}^{s}W_k[z^r]
 \prod_{j=0}^{k-1}\left(1+\frac{z}{z_0+j}\right)^3.
\]
Then $\mathcal D(z_0+\delta)=\sum_{r\ge0}\delta^rC_r$.  Every
denominator $z_0+j$, $0\le j<s$, has valuation at most one, whence
\begin{equation}\label{eq:app-Cr-rough}
 \vp(C_r)\ge-r.
\end{equation}
Thus $\delta^rC_r\in p^5\Zp$ for $r\ge5$.  For $r=3,4$ the gain occurs
only after summation.

The singular denominators are
\[
 z_0+(h+bp)=p(b-h),\qquad0\le b<h.
\]
Write $k=ap+d$, $0\le d<p$, and define
\[
 \tau_j(a)=[z^j]\prod_{b=0}^{a-1}
 \left(1+\frac{z}{b-h}\right)^3.
\]
For $d>h$, Kummer's theorem gives $W_k\in p^3\Zp$; for $d\le h$,
Lucas' theorem gives $W_k\equiv w_aw_d\pmod p$.  Hence
\[
 p^4C_4\equiv
 \sum_{a=0}^{h}w_a\tau_4(a)\sum_{d=0}^{h}w_d=0\pmod p,
\]
so $\vp(C_4)\ge-3$ and $\vp(\delta^4C_4)\ge5$.

For the cubic layer retain one further digit.  Uniformly for
$0\le a,d\le h$,
\[
 W_{ap+d}\equiv w_aw_d\{1+3pL_{a,d}\}\pmod {p^2},
 \qquad
 L_{a,d}=hH_h-aH_d-(h-a)H_{h-d}.
\]
The nonsingular factors have first coefficient
\[
 \nu_d\equiv-3(H_h-H_{h-d})\pmod p.
\]
If $T_h=\sum_{d=0}^{h}w_dH_d$, reflection gives
\[
 \sum_{d=0}^{h}w_dL_{a,d}=(h-2a)T_h,\qquad
 \sum_{d=0}^{h}w_d\nu_d=-3T_h.
\]
Collecting the next-to-leading cubic poles therefore yields
\begin{equation}\label{eq:app-C3-pole}
 p^3C_3\equiv3pT_h\sum_{a=0}^{h}w_a
 \{(h-2a)\tau_3(a)-\tau_2(a)\}\pmod {p^2}.
\end{equation}
By \eqref{eq:app-tauraso-input}, $T_h=-c_m/6\in p\Zp$; hence
$\vp(C_3)\ge-1$ and $\vp(\delta^3C_3)\ge5$.  Together with
\eqref{eq:app-Cr-rough}, this proves
\[
 \mathcal D(z_0+\delta)\equiv
 C_0+\delta C_1+\delta^2C_2\pmod {p^5}.
\]
In particular, the cubic and quartic layers are not discarded term by
term: their extra divisibility is a cancellation in the finite sum.

Writing $p=4m+3$, put $\rho=3m+2$.  Splitting the factorials into
complete $p$-blocks and taking the unit logarithm gives
\[
 \log\left(\frac1{p^2}\frac{(3n)!}{(n!)^3}\right)
 =3pm(H_m-H_\rho)
 +p^2\{-1-\rho^2/2\}H_m^{(2)}
 +\frac32m^2p^2H_\rho^{(2)}\pmod {p^3}.
\]
Using $H_\rho=H_m+pH_m^{(2)}\pmod {p^2}$ and
$H_\rho^{(2)}=-H_m^{(2)}\pmod p$ gives
\[
 \frac1{p^2}\frac{(3n)!}{(n!)^3}
 =1+\frac38p^2H_m^{(2)}\pmod {p^3};
\]
the displayed first and second logarithmic derivatives give the reciprocal
factor
$1-\frac38p^2H_m^{(2)}\pmod {p^3}$.  Thus the front range is
$p^2\pmod {p^5}$.

We give the one-carry calculation for the complementary rear range
\[
 \mathcal R_p=\sum_{k=(p^2+1)/2}^{p^2-1}A_k.
\]
Write $k=ap+d$, $0\le a,d<p$.  Kummer's theorem shows that exactly one
carry occurs precisely when
\[
 h<a<p,\qquad0\le d\le h;
\]
all remaining terms have two carries and lie in $p^6\Zp$.  Therefore
\begin{equation}\label{eq:app-rear-strata}
 \mathcal R_p\equiv
 \sum_{a=h+1}^{p-1}\sum_{d=0}^{h}A_{ap+d}\pmod {p^6}.
\end{equation}
Put
\[
 \Lambda_d=\frac12H_d^{(2)}-H_{2d}^{(2)}.
\]
After removing the common power of $p$, finite-product expansion gives
\[
 \frac{A_{ap+d}}{A_{ap}A_d}
 \equiv\exp\{6paD_d+6p^2a^2\Lambda_d\}\pmod {p^3}.
\]
We have $S_0\in p^2\Zp$, $S_D\in p\Zp$, and reflection gives
\[
 \sum_{d=0}^{h}A_d(6\Lambda_d+18D_d^2)\equiv0\pmod p.
\]
Keeping exactly the relative precision that survives multiplication by
$A_{ap}\in p^3\Zp$ yields
\begin{equation}\label{eq:app-rear-inner}
 \sum_{d=0}^{h}A_{ap+d}
 \equiv p^2A_{ap}(\sigma_0+6a\sigma_1)\pmod {p^6}.
\end{equation}

Write $a=p-1-b$, $0\le b<h$.  Removing the unique factor $p$ from the
central binomial coefficient gives
\[
 \frac1p\binom{2(p-1-b)p}{(p-1-b)p}
 \equiv-\frac1{(2b+1)\binom{2b}{b}}\pmod p.
\]
Together with $64^{(p-1-b)p}\equiv64^{-b}\pmod p$, this gives
\begin{equation}\label{eq:app-rear-outer}
 \frac{A_{(p-1-b)p}}{p^3}
 \equiv-\frac1{(2b+1)^3A_b}\pmod p.
\end{equation}
Substitution of \eqref{eq:app-rear-inner} and
\eqref{eq:app-rear-outer} into \eqref{eq:app-rear-strata} gives
\[
 \frac{\mathcal R_p}{p^5}
 \equiv-\sum_{b=0}^{h-1}
 \frac{\sigma_0+6(p-1-b)\sigma_1}{(2b+1)^3A_b}\pmod p.
\]
Using
\[
 \frac{A_{b+1}}{A_b}=\frac{(2b+1)^3}{8(b+1)^3}
\]
and putting
\[
 R_3=\sum_{x=1}^{h}\frac1{x^3A_x},\qquad
 R_2=\sum_{x=1}^{h}\frac1{x^2A_x},
\]
we obtain the scalar reduction
\begin{equation}\label{eq:app-rear-scalar}
 \frac{\mathcal R_p}{p^5}
 \equiv-\frac18(\sigma_0R_3-6\sigma_1R_2)\pmod p.
\end{equation}

It remains to relate the reciprocal moments.  The ratio recurrence gives
\[
 \frac1{x^3A_x}=\frac8{(2x-1)^3A_{x-1}}.
\]
Put $y=h+1-x$.  Modulo $p$, anti-palindromy gives
\[
 A_y\equiv-A_{x-1},\qquad2x-1\equiv-2y,
\]
and hence
\[
 \frac1{x^3A_x}\equiv\frac1{y^3A_y}\pmod p.
\]
The involution $x\mapsto h+1-x$ therefore gives
\[
 2R_2\equiv
 \sum_{x=1}^{h}(x+y)\frac1{x^3A_x}
 =(h+1)R_3\equiv\frac12R_3\pmod p,
\]
so
\begin{equation}\label{eq:app-reciprocal-reflection}
 R_3\equiv4R_2\pmod p.
\end{equation}
Together with $\sigma_0\equiv3\sigma_1/2\pmod p$, this yields
\[
 \sigma_0R_3-6\sigma_1R_2
 \equiv(4\sigma_0-6\sigma_1)R_2=0\pmod p.
\]
Thus \eqref{eq:app-rear-scalar} proves
$\mathcal R_p\in p^6\Zp$, and
\begin{equation}\label{eq:app-origin}
 \sum_{k=0}^{p^2-1}A_k=p^2\pmod {p^5},\qquad
 (TL)(0)=1\pmod {p^3}.
\end{equation}
Combining \eqref{eq:app-TLconstant} with \eqref{eq:app-origin} proves
\begin{equation}\label{eq:app-all-points}
 E_0(a)=p^2\{(TL)(a)-1\}\in p^5\Zp
 \qquad(a\in\mathbb Z_{\ge0}).
\end{equation}

It remains to pass from point values to Taylor coefficients.  Write
$E_0=\sum t_na^n$.  By \cref{prop:bidisc},
$\vp(t_n)\ge\alpha_pn$, and
$6\alpha_p\ge6\alpha_7=29/7>4$; hence $t_n\in p^5\Zp$ for $n\ge6$.
The degree-five truncation consequently has values in $p^5\Zp$ at
$0,1,\ldots,5$.  Its evaluation matrix has determinant
$\prod_{j=1}^{5}j!$, a unit for $p\ge7$, so its first six coefficients
also belong to $p^5\Zp$.  This proves $E_0\in p^5\OO$.
The same two-node argument, using $2\alpha_p>1$, proves $S\in p^2\OO$.

Only two external one-block congruences used in this appendix come from
\cite{Tauraso}: equation \eqref{eq:app-tauraso-input} is his
Lemma~1, equation~(13), and \eqref{eq:app-half-harmonic} follows from
the half-harmonic congruence recorded in his Section~2, equation~(4).
The residue identities, the two-block calculation and the congruence
\eqref{eq:app-all-points} are proved here and are not attributed to
Tauraso's Theorem~3.

\section{Denominator control and the actual Green primitive}
\label{app:green-denominators}

There are two useful estimates for polynomial division by $D$.  The
elementary double-factorial estimate is sufficient for large primes.  The
discrete Green kernel has only logarithmic denominator loss and is essential
at $p=3$.

For $n\ge2$, define $Q_n,R_n$ by
\begin{equation}\label{eq:app-monomial-division}
 a^n=DQ_n+R_n,\qquad\deg R_n\le1.
\end{equation}
Since
\[
 D(a^j)=-(j+3/2)a^{j+2}+\text{terms of degree at most }j+1,
\]
descending elimination divides at most once by each pivot
$3/2,5/2,\ldots,(2n-1)/2$.  Every other coefficient of $D(a^j)$ is
$p$-integral.  Therefore
\[
 \|Q_n\|,\|R_n\|\le p^{B_p(n)},\qquad
 B_p(n)=\sum_{j=0}^{n-2}\vp(2j+3)=\vp((2n-1)!!).
\]
Finally,
\[
 B_p(n)=\vp((2n)!)-\vp(n!)
 \le\sum_{r\ge1}\frac{2n}{p^r}=\frac{2n}{p-1},
\]
which proves \cref{lem:double-factorial}.

We now prove the logarithmic estimate.  Put
$F_n=a\fall n$ and $P_k=(a-1)\fall k$.  The identities
$aP_k=(a-k)F_k$ and $P_k(a+1)=F_k(a)$ give
\[
 DP_k=-A_k^*F_k-B_k^*F_{k+1}-C_k^*F_{k+2},
\]
with the coefficients in \eqref{eq:three-term}.  Applying either quotient
coordinate to this identity shows that $G_n=n!g_n$ and $H_n=n!b_n$ solve
the associated homogeneous recurrence.  Their Casoratian satisfies
\[
 W_{n+1}=\frac{A_n^*}{C_n^*}W_n,\qquad W_0=1,
\]
and hence equals \eqref{eq:wronskian}.  Variation of constants now gives
\[
 c_{n,k}=-\frac{G_{k+1}H_n-H_{k+1}G_n}{C_k^*W_{k+1}}.
\]
The index $k+1$ is fixed by the inhomogeneous initial conditions: as a
function of $n$, this expression vanishes at $n=k+1$ and equals
$-1/C_k^*$ at $n=k+2$.  Thus it is precisely the coefficient of $P_k$
in the primitive of $F_n$.

Writing $m=k+1$ and substituting the Casoratian gives the cancellation
\begin{equation}\label{eq:app-green-cancelled}
 c_{n,k}=-\frac{2\cdot16^mn!}
 {m!\binom{2m}{m}^2}(g_mb_n-b_mg_n).
\end{equation}
The quotient $n!/m!$ is integral.  Kummer's carry formula gives
$\vp\binom{2m}{m}\le\lfloor\log_p(2m)\rfloor$, while
\eqref{eq:momentbound} controls the two moments.  Hence
\[
 \vp(c_{n,k})\ge-6\lfloor\log_p(2n+1)\rfloor.
\]
Both the $P_k$ and the Stirling change
$a^n=\sum_j S(n,j)F_j$, where $S(n,j)$ are the integral Stirling numbers
of the second kind, have integral coefficients, proving
\cref{lem:green-bound}.

For clarity, suppose $f=\sum t_na^n$ satisfies
$\vp(t_n)\ge\sigma n-C$, $\sigma>0$, and $\Pi f=0$.  On the disc
$|a|_p\le p^\delta$, $0<\delta<\sigma$, the terms of
$q_f=\sum_{n\ge2}t_nQ_n$ have norm at most
\[
 p^{-\sigma n+C+6\lfloor\log_p(2n+1)\rfloor+\delta(n-2)},
\]
which tends to zero.  Translation by one is bounded on this disc, so $D$
may be applied termwise.  The accumulated linear remainder is exactly
$\gamma(f)+\beta(f)u=0$.  Consequently $Dq_f=f$.
This proves existence of an actual overconvergent primitive; it neither
assumes nor proves that $D\Aalg$ is closed on the unit-radius Tate algebra.

\section{The finite certificate at three}
\label{app:p3}

Here we separate the theoretical infinite tail from the finite exact
calculation.  Write $E_0=(T^2-9)1=\sum t_na^n$.  The gamma estimate gives
\begin{equation}\label{eq:app-p3-slope}
 \vp(t_n)\ge\lceil n/6\rceil,
\end{equation}
and \cref{prop:spectrum} gives $\Pi E_0=0$.  Thus the preceding Green
construction gives an actual $q=\sum_{n\ge2}t_nQ_n$ with $Dq=E_0$.

For $199\le n\le363$,
$\lfloor\log_3(2n+1)\rfloor=5$ and $n/6-30>3$.  If
$\ell=\lfloor\log_3(2n+1)\rfloor\ge6$, then
\[
 n/6-6\ell\ge(3^\ell-1)/12-6\ell>3;
\]
the last expression is positive at $\ell=6$ and increases thereafter.
Since valuations are integral,
\begin{equation}\label{eq:app-p3-tail}
 \vp(t_nQ_n)\ge4\qquad(n\ge199).
\end{equation}
The lower bound tends to infinity, so the entire tail is a convergent
element of $81\OO$.  Nothing about this infinite range is delegated to a
computer.

It remains to certify $q_{\rm low}=\sum_{n=2}^{198}t_nQ_n$.  Let
\[
 \exp(z+z^3/3)=\sum_{n\ge0}d_nz^n,\qquad
 d_0=1,\quad nd_n=d_{n-1}+d_{n-3},\quad d_j=0\ (j<0).
\]
Then
\begin{equation}\label{eq:app-gamma3}
 G(z)=\Gamma_3(-3z)=\sum_{m\ge0}(-3)^md_{3m}z\fall m,\qquad
 \vp((-3)^md_{3m})\ge m/6.
\end{equation}
At precision $3^P$, omission of the terms $m\ge6P$ therefore gives an
error in $3^P\mathbb Z_3\langle z\rangle$.  The exact outer weight is
\begin{equation}\label{eq:app-V3}
 V(a)=\left(\frac{G(-a-1/2)}
 {G(-1/2)G(-a)(1+6a)}\right)^3.
\end{equation}
All denominators here are units.  Integral affine substitutions,
multiplication and inversion of units preserve the $3^P$ error ball.
The certificate constructs $V$, then $S=T1$, and then
\[
 E_0=V(a)\sum_{d=0}^{2}
 \left(\frac{(1/2+3a)_d}{(1+3a)_d}\right)^3S(3a+d)-9.
\]
It retains Taylor degree $<6P$, not merely degree $198$.  The slope
\eqref{eq:app-p3-slope} therefore ensures that substitution
$a\mapsto3a+d$ cannot return a discarded term to low degree with
uncontrolled precision.  For $0\le n\le198$, the computed coefficient
$\widetilde t_n$ satisfies
\begin{equation}\label{eq:app-tn-error}
 t_n-\widetilde t_n\in3^P\mathbb Z_3.
\end{equation}

Every $Q_n$, $n\le198$, is constructed by exact rational division, and
the identity \eqref{eq:app-monomial-division} is checked over $\mathbb Q$.
The actual maximum denominator loss in this finite range is five powers
of $3$.  Thus \eqref{eq:app-tn-error} loses at most five powers.  Even the
uniform logarithmic bound loses at most
$6\lfloor\log_3(397)\rfloor=30$ powers, so $P=34$ still leaves the four
powers needed.

At $P=34$, exact rational arithmetic followed by exact reduction modulo
$3^{34}$ shows coefficientwise that
\[
 \sum_{n=2}^{198}\widetilde t_nQ_n\in81\mathbb Z_3[a].
\]
Indeed every individual product $\widetilde t_nQ_n$ already has this
property, so no unexplained cancellation between different $n$ is used.
A second run at $P=38$ gives the same normalized coefficient vector modulo
$3^6$.  The error argument above, rather than agreement of the two runs,
proves that $q_{\rm low}\in81\OO$.  Together with
\eqref{eq:app-p3-tail}, this yields
\[
 (T^2-9)1=Dq,\qquad q\in81\mathbb Z_3\langle a\rangle.
\]
The program certifies only the finite Taylor range $2\le n\le198$; the
spectral theorem supplies the vanishing linear remainder, and the
infinite tail is proved symbolically above.

\section*{Statements and Declarations}

\paragraph{Funding.} The author received no funding for this work.

\paragraph{Competing interests.} The author declares no competing interests.

\paragraph{Data and code availability.} The exact PARI/GP certificate,
its fail-closed runner, and saved output are included as arXiv ancillary
files and mirrored at
\url{https://github.com/huiminZheng-collab/sun-conjecture-2-6-proof-candidate}.

\paragraph{Generative-AI disclosure.}
The proof presented in this article was found by OpenAI Codex.

\end{document}